\numberwithin{equation}{section}
\newtheorem{theorem}{Theorem}[section]
\newtheorem{proposition}[theorem]{Proposition}
\theoremstyle{definition}
\theoremstyle{definition} %%{remark}
\newcommand{\bea}{\begin{eqnarray}}
\newcommand{\eea}{\end{eqnarray}}
\newcommand{\beas}{\begin{eqnarray*}}
\newcommand{\eeas}{\end{eqnarray*}}
\newcommand{\beq}{\begin{equation}}
\newcommand{\eeq}{\end{equation}}
\newcommand{\cR}{\mathcal R}
\newcommand{\cL}{\mathcal L}
\newcommand{\cM}{\mathcal M}
\newcommand{\cPM}{\mathcal{P\!M}}
\newcommand{\C}{\mathbb C}
\newcommand{\R}{\mathbb R}
\newcommand{\N}{\mathbb N}
\newcommand{\EE}{\mathbb E}
\newcommand{\FF}{\mathbb F}
\newcommand{\ZZ}{\mathbb Z}
\newcommand{\Ver}{|\!|}
\newcommand{\ve}{\varepsilon}
\DeclareMathSymbol{\complement}{\mathord}{AMSa}{"7B}
\def\vv<#1>{\langle #1\rangle}
\def\Vv<#1>{\bigl\langle #1\bigr\rangle}
\begin{document}

% TOPMATTER

\title[Rayleigh-Taylor Instability]
{On the Rayleigh-Taylor Instability for the two-Phase Navier-Stokes Equations}

\author[J.~Pr\"uss]{Jan Pr\"uss}
\address{Institut f\"ur Mathematik \\
         Martin-Luther-Universit\"at Halle-Witten\-berg\\
         D-06120 Halle, Germany}
\email{jan.pruess@mathematik.uni-halle.de}

\author[G.~Simonett]{Gieri Simonett}
\address{Department of Mathematics\\
         Vanderbilt University \\
         Nashville, TN~37240, USA}
\email{simonett@math.vanderbilt.edu}

 \date{August 22, 2009}
\thanks{The research of the second author was partially
supported by the NSF Grant DMS-0600870.}
%----------classification, keywords, date
\subjclass[2000]{Primary: 35R35; Secondary: 35Q10, 76D03, 76D45, 76T05}

\keywords{Navier-Stokes equations, free boundary problem, 
surface tension, gravity, Rayleigh-Taylor instability, well-posedness, analyticity.}

\begin{abstract}
The two-phase free boundary problem with surface tension and downforce 
gravity for the Navier-Stokes system is considered in
a situation where the initial interface is close to equilibrium. The boundary symbol of this problem admits zeros in the unstable halfplane in case the heavy fluid is on top of the 
light one, which leads to the well-known Rayleigh-Taylor instability. Instability is proved rigorously in an $L_p$-setting by means of an abstract instability result due to Henry \cite{henry}.
\end{abstract}
%%%%%%%%%%%%%%%%%%%%%%%%%%%%%%%%%%%%%%%%
%%%%%%%%%%%%%%%%%%%%%%%%%%%%%%%%%%%%%%%%
\maketitle

%%%%%%%%%%%%%%%%%%%%%%%%%%%%%%%%%%%%%%%%%%%%%
\section{Introduction}
%%%%%%%%%%%%%%%%%%%%%%%%%%%%%%%%%%%%%%%%%%%%%
Of concern is the
motion of two immiscible, viscous, incompressible capillary fluids,
{\em fluid$_1$} and {\em fluid$_2$}, 
% with constant densities  $\rho_i$ and (dynamic) viscosities $\mu_i$,
that occupy the regions
\begin{equation*}
\Omega_i(t)=\{(x,y)\in\R^n\times\R: (-1)^i(y-h(t,x))>0,\ t\ge 0\},
\quad i=1,2.
\end{equation*}
The fluids are separated by a sharp interface \
\begin{equation*}
\Gamma(t)=\{(x,y)\in\R^n\times\R: y=h(t,x),\ t\ge 0\}
\end{equation*}
with an unknown function $h$
that needs to be determined as part of the problem.
The motion of the fluids is governed by the incompressible
Navier-Stokes equations with surface tension and downforce gravity
and reads as follows, where $i=1,2$;
%%%%%%%%%%%%%%%%
\begin{equation}
\label{NS-2phase}
\left\{
\begin{aligned}
\rho_i\big(\partial_tu+(u\cdot\nabla)u\big)-\mu_i\Delta u+\nabla q
                       & = -\rho_i \gamma_a e_{n+1} &\ \hbox{in}\quad &\Omega_i(t)\\
    {\rm div}\,u & = 0 &\ \hbox{in}\quad &\Omega_i(t) \\
             %  u=\ab 0 &\ \hbox{on}&\partial\Omega \\
           -{[\![S(u,q)\nu]\!]}& = \sigma\kappa \nu &\ \hbox{on}\quad &\Gamma(t)\\
            {[\![u]\!]}& = 0 &\ \hbox{on}\quad &\Gamma(t) \\
                      V& = u\cdot \nu&\ \hbox{on}\quad &\Gamma(t)\\
             u(0)& = u_0&\ \hbox{in}\quad &\Omega_i(0)\\
              \Gamma(0)& = \Gamma_0\,.\\
\end{aligned}
\right.
\end{equation}
%%%%%%%%%%%%%%%%%%%%%%%%
The constants $\rho_i>0$ and $\mu_i>0$ denote the densities
and the viscosities of the respective fluids,
$\sigma$ stands for the surface tension and 
$\gamma_a$ is the acceleration of gravity.
Moreover,
$S(u,q)$ is the stress tensor defined by
\begin{equation*}
S(u, q)=\mu_i\big(\nabla u+(\nabla u)^{\sf T}\big)-qI
\quad \text{in}\quad \Omega_i(t),
\end{equation*}
and
$
[\![v]\!]=(v_{|_{\Omega_2(t)}}-v_{|_{\Omega_1(t)}}\big)|_{\Gamma(t)}
$
denotes the jump of the quantity $v$, defined on the respective
domains $\Omega_i(t)$, across the interface $\Gamma(t)$.
Finally,
$\kappa=\kappa(t,\cdot)$ is the mean curvature of the free boundary $\Gamma(t)$,
$\nu=\nu(t,\cdot)$ is the unit normal field on
$\Gamma(t)$, and $V=V(t,\cdot)$ is the normal velocity of $\Gamma(t)$.
Here we use the convention that
$\nu(t,\cdot)$ points from $\Omega_1(t)$ into $\Omega_2(t)$, and
that $\kappa(x,t)$ is negative when $\Omega_1(t)$ is convex
in a neighborhood of $x\in\Gamma(t)$.
System \eqref{NS-2phase} comprises the {\em two-phase 
Navier-Stokes equations with surface tension subject to gravity}.
%%%%%%%%%%%%%%%%%
In order to economize our notation, we set
\begin{equation*}
\rho=\rho_1\chi_{\Omega_1(t)}+ \rho_2\chi_{\Omega_2(t)},
\quad
\mu=\mu_1\chi_{\Omega_1(t)}+\mu_2\chi_{\Omega_2(t)},
\end{equation*}
where $\chi$ denotes the indicator function. It is convenient to introduce the {\em modified pressure} $\tilde{q}:= q+\rho\gamma_a y$.
With this convention system \eqref{NS-2phase} can be recast as
%%%%%%%%%%%%%%%%
\begin{equation}
\label{NS-2}
\left\{
\begin{aligned}
 \rho\big(\partial_tu+(u\cdot\nabla)u\big)-\mu\Delta u+\nabla \tilde{q}
                   &=  0\ &\hbox{in}\quad &\Omega(t)\\
      {\rm div}\,u & = 0\ &\hbox{in}\quad &\Omega(t) \\
     -{[\![S(u,\tilde q)\nu]\!]}
        & = \sigma\kappa \nu +[\![\rho]\!]\gamma_a y&\hbox{on}\quad &\Gamma(t)\\
       {[\![u]\!]} & = 0\ &\ \hbox{on}\quad &\Gamma(t) \\
                  V& = u\cdot\nu &\ \hbox{on}\quad &\Gamma(t)\\
               u(0)& = u_0&\ \hbox{in}\quad &\Omega_0\\
          \Gamma(0)& = \Gamma_0\,.\\
\end{aligned}
\right.
\end{equation}
%%%%%%%%%%%%%%%%%%%

Given are the initial velocity
$u_0:\Omega_0\to \R^{n+1}$ with $\Omega_0:=\Omega_1(0)\cup\Omega_2(0)$
as well as the initial position $\Gamma_{0}=\text{graph}\,(h_0)$.
The unknowns are the velocity field
$u(t,\cdot):\Omega(t)\to \R^{n+1}$,
the pressure field $q(t,\cdot):\Omega(t)\to \R$,
and the free boundary $\Gamma(t)$,
where $\Omega(t):=\Omega_1(t)\cup\Omega_2(t)$.
\smallskip\\
%%%%%%%%%%%%%%%%%%%%%%%%%%%%%
In case that $\Omega_1(t)$ is a bounded domain, $\gamma_a=0$, and
$\Omega_2(t)=\emptyset$, one obtains
the {\em one-phase} Navier-Stokes equations with surface tension,
describing the motion of an isolated volume of fluid.
For an overview of the existing literature in this case we refer
to the recent publications \cite{PrSi09a, SS07, SS09, So03b}.
\smallskip\\
The motion of a layer of  viscous,
incompressible fluid in an ocean of infinite extent,
bounded below by a solid surface and above by a free surface which includes
the effects of surface tension and gravity
(in which case $\Omega_0$ is a strip, bounded above
by $\Gamma_0$ and below by a fixed surface $\Gamma_b$)
has been considered by~\cite{Al87, Bea84, BeaNi84, SS09, Ta96, TT95}.
If the initial state and the initial velocity are close
to equilibrium, global existence of solutions is proved
in \cite{Bea84} for $\sigma>0$, and in \cite{TT95} for $\sigma\ge 0$,
and the asymptotic decay rate for $t\to\infty$
is studied in~\cite{BeaNi84}.
We also refer to \cite{BPS05}, where in addition the
presence of a surfactant on the free boundary and in one of the bulk phases is 
considered but gravity is neglected.
\smallskip\\
%%%%%%%%%%%%%%%%%%%%
Previous results concerning the {\em two-phase problem}~\eqref{NS-2}
with $\gamma_a=0$ in the $3D$-case
are obtained in \cite{Deni91, Deni94, DS95,Tanaka93}.
In more detail, Densiova~\cite{Deni94}
establishes existence and uniqueness of solutions
(of the transformed problem in Lagrangian coordinates)
with $v\in W^{s,s/2}_2$ for $s\in (5/2,3)$
in case that one of the domains is bounded.
Tanaka \cite{Tanaka93} considers the two-phase Navier-Stokes equations
with thermo-capillary convection in bounded domains, and he obtains
existence and uniqueness of solutions
with $(v,\theta)\in W^{s,s/2}_2$ for $s\in (7/2,4)$, with $\theta$ denoting the temperature.
\smallskip\\
%%%%%%%%%%%%%%%%%%%%%%%%
Here we are interested in the situation where $\Gamma_0$ is close to
a plane, say $\R^n$ with $n\ge 2$, i.e.\ $\Gamma_0$ is a graph over $\R^n$, given by a function $h_0$ that is small in an appropriate norm. Then it is
natural to transform the problem to a flat fixed interface,
and solve the resulting quasilinear evolution problem.
Our basic well-posedness and regularity result for
problem \eqref{NS-2} reads as follows.
%%%%%%%%%%%%%%%%%%%%%%%%%
\begin{theorem}
\label{th:1.1} 
Fix $p>n+3$ and let 
$$
(u_0, h_0)\in W^{2-2/p}_p(\Omega_0,\R^{n+1})\times W^{3-2/p}_p(\R^n)
$$
be given.
Assume that the compatibility conditions
\begin{equation}
\label{compcond}
{\rm div}\, u_0=0\; \mbox{ on }\; \Omega_0,\quad
[\![\mu D_0\nu_0-\mu(\nu_0\cdot D_0\nu_0)\nu_0]\!]=0,\quad [\![u_0]\!]=0\;
\mbox{on }\; \Gamma_0,
\end{equation}
are satisfied, where $D_0=\big(\nabla u_0+(\nabla u_0)^{\sf T}\big)$,
and $\nu_0$ is the unit normal field on $\Gamma_0$.
Then for each $a>0$ there exists $\eta>0$ such that for 
$$
\Ver u_0\Ver_{W^{2-2/p}_p(\Omega_0)} +\Ver h_0\Ver_{W^{3-2/p}_p(\R^n)}<\eta,
$$ 
there exists a unique classical solution $(u,\tilde{q},\Gamma)$
of problem \eqref{NS-2} on $(0,a)$. In addition,
$\displaystyle \cM=\cup_{t\in(0,a)}\big(\{t\}\times\Gamma(t)\big)$
is a real analytic manifold, 
and the function $(u,\tilde q):\mathcal O\rightarrow\R^{n+2}$ is real analytic,
where
$\displaystyle\mathcal O:=\cup_{t\in (0,a)}\big(\{t\}\times \Omega(t)\big).$
%	$$\mathcal O:=\{(t,x,y):\; t\in(0,t_0),\; x\in\R^n,
%	y\neq h(t,x)\},$$
\end{theorem}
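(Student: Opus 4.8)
The plan is to follow the standard route for quasilinear free boundary problems: flatten the interface by a Hanzawa-type transform, solve the resulting principal linear problem via maximal $L_p$-regularity, close a fixed-point argument for the full nonlinear system, and finally obtain analyticity by the parameter trick. Let $\dot{\R}^{n+1}:=\R^{n+1}\setminus(\R^n\times\{0\})$ and let $\Theta_h(t,x,y)=(x,\,y+\chi(y)h(t,x))$ with a fixed cut-off $\chi\in C^\infty_c$ equal to $1$ near $y=0$; then $\Theta_h(t,\cdot)$ is a diffeomorphism of $\dot{\R}^{n+1}$ onto $\Omega(t)$ carrying $\R^n\times\{0\}$ onto $\Gamma(t)=\mathrm{graph}\,h(t,\cdot)$ as long as $\|h(t,\cdot)\|_\infty$ is small. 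With $\bar u:=u\circ\Theta_h$ and $\bar q:=\tilde q\circ\Theta_h$, system \eqref{NS-2} becomes a quasilinear problem on $\dot{\R}^{n+1}$ whose principal part consists of the two-phase Stokes operator $(\rho\partial_t\bar u-\mu\Delta\bar u+\nabla\bar q,\ \mathrm{div}\,\bar u)$ together with the kinematic condition $\partial_t h=\bar u\cdot e_{n+1}$ on $\R^n\times\{0\}$ and the linearized dynamic/continuity conditions
\[
-[\![\mu(\nabla\bar u+(\nabla\bar u)^{\sf T})\nu-\bar q\,\nu]\!]=\big(\sigma\Delta_x h+[\![\rho]\!]\gamma_a h\big)e_{n+1},\qquad [\![\bar u]\!]=0,
\]
plus right-hand sides $F(\bar u,\bar q,h)$, $F_d(\bar u,h)$, $G(\bar u,h)$, $H(\bar u,h)$ collecting all nonlinear and transform-induced lower-order terms. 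Since $p>n+3$ one has $W^{2-2/p}_p(\Omega_0)\hookrightarrow C^1$ and $W^{3-2/p}_p(\R^n)\hookrightarrow C^2$, which furnishes the $L_\infty$-bounds and pointwise-multiplier and algebra estimates needed to control these remainders and to keep $\Theta_h$ admissible.

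The decisive step is maximal $L_p$-regularity for this linearization with inhomogeneous right-hand sides. On $J=(0,a)$ one solves for $(\bar u,\bar q,h)$ in
\[
\mathbb E(a)=\big(H^1_p(J;L_p(\dot{\R}^{n+1}))\cap L_p(J;H^2_p(\dot{\R}^{n+1}))\big)^{n+1}\times L_p(J;\dot H^1_p(\dot{\R}^{n+1}))\times\mathbb E_h(a),
\]
where $\mathbb E_h(a)$ denotes the natural maximal-regularity class for the free surface, whose temporal trace at $t=0$ is $W^{3-2/p}_p(\R^n)$, and one shows that the associated data map is an isomorphism onto the appropriate space $\mathbb F(a)$. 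This relies on the boundary-symbol (Lopatinskii--Shapiro) analysis of the coupled two-phase Stokes/surface-tension/gravity operator, of the kind carried out for the one-phase problem in \cite{PrSi09a}. The subtlety stressed in the introduction is that, when the heavy fluid lies on top, the symbol associated with $\sigma\Delta+[\![\rho]\!]\gamma_a$ has zeros in the right half-plane, so there is \emph{no} maximal regularity on the half-line; on the finite interval $J$, however, passing to the shifted unknown $e^{-\omega t}\bar u$ makes the shifted operator invertible for $\omega$ large, which is all that is needed here. The three identities in \eqref{compcond} are precisely the time-trace compatibility conditions that a member of $\mathbb E(a)$ must satisfy at $t=0$, so $(u_0,h_0)$ is an admissible initial value.

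Granting the isomorphism $\mathcal L\colon\mathbb E(a)\to\mathbb F(a)$, the nonlinear problem reads $\mathcal Lz=\mathcal N(z)$ with $z=(\bar u,\bar q,h)$ and $\mathcal N$ assembling $F,F_d,G,H$ and the initial data; the lower-order/small-coefficient structure together with the above embeddings shows that $z\mapsto\mathcal L^{-1}\mathcal N(z)$ maps a small ball of $\mathbb E(a)$ into itself and is a contraction there once $\eta=\eta(a)$ is small, which yields the unique solution, and undoing $\Theta_h$ (legitimate since $h$ stays small) gives the asserted classical solution $(u,\tilde q,\Gamma)$ on $(0,a)$. For the analyticity assertions I would invoke the parameter trick of Escher--Pr\"uss--Simonett: for $\lambda$ in a complex neighbourhood of $1$, rescale the independent variables by $\lambda$ (in time and space, with $h$ rescaled accordingly) to obtain transformed unknowns $z_\lambda$ solving a problem $\mathcal N_\lambda(z_\lambda)=0$ whose coefficients depend polynomially — hence analytically — on $\lambda$, with $\mathcal N_1=\mathcal N$ and with linearization at the solution still an isomorphism by the maximal-regularity result above; the analytic implicit function theorem then makes $\lambda\mapsto z_\lambda$ analytic, and unravelling the scaling yields joint real analyticity of $(u,\tilde q)$ on $\mathcal O$ and of $h$, so that $\cM=\cup_{t\in(0,a)}(\{t\}\times\Gamma(t))$ is a real analytic manifold. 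The main obstacle is the linear input identified above — verifying the Lopatinskii--Shapiro condition and extracting maximal $L_p$-regularity on $J$ for the coupled operator despite the instability; with that in hand the remaining steps are perturbative and essentially routine.
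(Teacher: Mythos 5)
Your outline is correct and all of its essential ingredients coincide with the paper's, but the packaging differs from what the paper actually does for Theorem~\ref{th:1.1}. The paper's stated proof is a reduction: it cites the case $\gamma_a=0$ from \cite{PrSi09} and observes that the additional interface term $[\![\rho]\!]\gamma_a h$ is of lower order; moreover, Section 6 supplies a second, different existence proof in which the \emph{nonlinear} compatibility conditions \eqref{compatibility} of the transformed problem are handled by parameterizing the phase manifold $\cPM$ over the linear subspace $X^0_\gamma$ via the stationary maximal-regularity result (Theorem~\ref{thm2.2}) and then solving the evolution problem by the implicit function theorem in maximal-regularity spaces; this yields, beyond existence and uniqueness, real-analytic dependence of the solution on the initial data, which is exactly what the later instability argument needs. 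Your version is the direct contraction/fixed-point variant of the same scheme (essentially that of \cite{PrSi09}): flatten the interface, use finite-interval maximal $L_p$-regularity for the linearized two-phase Stokes problem (you correctly note that the Rayleigh--Taylor zeros of the boundary symbol only obstruct half-line maximal regularity and are removed by an exponential shift, consistent with Theorem 2.1 being stated on $J=[0,a]$), contract on a small ball for small data, and obtain analyticity by the parameter trick --- the mechanism invoked in step (iii) of Section 6. Two points to tighten: after the transformation the compatibility conditions are the nonlinear ones ${\rm div}\,u_0=F_d(u_0,h_0)$ and $-[\![\mu\partial_y v_0]\!]-[\![\mu\nabla_x w_0]\!]=G_v(u_0,h_0)$ (equivalent to \eqref{compcond} in the original variables), not trace conditions attached to membership in your space $\mathbb{E}(a)$; this is harmless for your contraction because they involve only the fixed initial trace, but it is precisely this nonlinearity that motivates the paper's phase-manifold construction. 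Also, the boundary-symbol analysis you defer to \cite{PrSi09a} concerns the two-phase (not one-phase) problem with gravity.
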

%%%%%%%%%%%%%%%%%%%%%%
\begin{proof}
This result is proved in \cite{PrSi09} in case that $\gamma_a=0$. The proof given there extends  to the case $\gamma_a>0$, as the additonal term $[\![\rho]\!]\gamma_a h$ on the interface is of lower order. Actually, in Section 6 we shall give a different existence proof based on the implicit function theorem.
\end{proof}
%%%%%%%%%%%%%%%%%%%%
\noindent
We mention that system \eqref{NS-2} has also been analyzed in \cite{PrSi09a}
for initial data that are not necessarily close to equilibrium.
More precisely, it is proved in \cite{PrSi09a} that
\eqref{NS-2} admits unique solutions (on a possibly small time interval)
that have the same regularity properties as above, provided 
$\Ver \nabla h_0\Ver_\infty$ is small enough. 
%As is to be expected, the proof is considerably more involved in this case.
\medskip\\
%%%%%%%%%%%%%%%%%%%%
It is the purpose of this paper to prove mathematically rigorously that the 
trivial solution $(u,h)=(0,0)$ of problem~\eqref{NS-2} is unstable in the phase manifold
$\cPM$, to be defined below, in an $L_p$-setting 
in case that the heavy fluid overlies the lighter one, i.e.\ if $\rho_2>\rho_1$.
This is the {\em Rayleigh-Taylor instability} which is well-known in 
Physics and Hydrodynamics,  cf.\ \cite{BePe54, Chan81, ElAs97, MMSZD78} 
and the references given there. 
The Rayleigh-Taylor instability manifests itself in the way
that any disturbance of the equilibrium solution $(u,h)=(0,0)$ will grow to produce spikes of the heavy fluid moving downward and bubbles of the light fluid moving upward.
The precise statement of our main result is as follows.
\goodbreak
%%%%%%%%%%%%%%%%%%%%%%
\begin{theorem}
\label{RT}
Let $p>n+3$.
Suppose that $\rho_1,\rho_2,\mu_1,\mu_2,\sigma,\gamma_a>0$ are constants
and $\rho_2>\rho_1$.
 Then the trivial equilibrium  $(u,\tilde q,h)=(0,0,0)$ is $L_p$-unstable. More precisely, there is a constant $\ve_0>0$ such that for each $\delta>0$ there are initial values 
$$
(u_0, h_0)\in W^{2-2/p}_p(\Omega_0,\R^{n+1})\times W^{3-2/p}_p(\R^n),
$$
subject to the compatibility conditions \eqref{compcond} in Theorem 1.1 with
$$\Ver u_0\Ver_{W^{2-2/p}_p}+\Ver h_0\Ver_{W^{3-2/p}_p}\leq \delta$$
such that the solution $(u,h)$ for some $t_0\in (0,a]$ satisfies
$$\Ver u(t_0)\Ver_{W^{2-2/p}_p}+\Ver h(t_0)\Ver_{W^{3-2/p}_p}\geq \ve_0.$$
\end{theorem}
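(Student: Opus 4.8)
The plan is to reduce \thmref{RT} to Henry's abstract instability theorem \cite{henry} for quasilinear parabolic evolution equations, following the paradigm: (i) transform the free boundary problem \eqref{NS-2} to a fixed domain and write it as an abstract quasilinear problem $\dot z + A(z)z = F(z)$ on a suitable $L_p$-based phase manifold $\cPM$; (ii) compute the linearization at the equilibrium $z=0$ and show that its generator $A_0$ has spectrum intersecting the open right half-plane; (iii) invoke Henry's result to conclude nonlinear instability. The first step is exactly the Hanzawa-type transformation $\Theta_h$ used in \cite{PrSi09, PrSi09a}: writing $\Omega(t)$ as a graph perturbation of $\R^n\times\{0\}$ via $h$, pulling back the unknowns, and keeping track that the extra gravity term $[\![\rho]\!]\gamma_a h$ is of lower order (as already noted in the proof of \thmref{th:1.1}), so the maximal regularity framework and the construction of $\cPM$ carry over verbatim from the $\gamma_a=0$ case.

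The heart of the matter is the spectral analysis of the linearization. After transformation and elimination of the pressure and of the divergence-free constraint (reducing to the interface variable $h$, or equivalently to the Dirichlet-to-Neumann-type operator governing the normal stress balance), one is led to a boundary symbol: for each frequency $\xi\in\R^n$, one solves the linearized two-phase Stokes resolvent problem in the two half-spaces and reads off the dispersion relation relating the Laplace variable $\lambda$ to $|\xi|=s$. I would show that this symbol, which for $\gamma_a=0$ has all its zeros in $\{\mathrm{Re}\,\lambda\le 0\}$, now acquires a zero with $\mathrm{Re}\,\lambda>0$ precisely when $\rho_2>\rho_1$: the destabilizing gravity contribution enters the symbol as a term proportional to $-[\![\rho]\!]\gamma_a s = (\rho_1-\rho_2)\gamma_a s<0$ competing against the stabilizing surface-tension term $\sigma s^3>0$ and the (always stabilizing) viscous dissipation. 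For small $s$ the gravity term dominates surface tension, and a Rouché/intermediate-value argument along the real axis (the symbol being real and continuous in $\lambda$ for real $\lambda$, negative at $\lambda=0$ for small $s$ and positive as $\lambda\to+\infty$) produces a real eigenvalue $\lambda(s)>0$. This yields a genuine unstable eigenvalue of $A_0$ with an associated eigenfunction, and one checks the eigenvalue is isolated and of finite multiplicity and that the rest of $\sigma(A_0)$ stays in a left half-plane, so the spectral gap hypothesis of Henry's theorem is met.

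The remaining ingredients are the structural hypotheses of \cite{henry}: the nonlinearity $F$ together with the quasilinear part must be smooth (indeed real analytic, which follows from the analyticity results already available in \cite{PrSi09}) on the phase manifold and satisfy the appropriate estimates near the equilibrium — e.g. $F(0)=0$, $F'(0)=0$, and Lipschitz-type bounds in the interpolation spaces — so that the local solutions from \thmref{th:1.1} exist on a uniform time interval $(0,a)$ for small data and the abstract instability mechanism applies. Here one must be a little careful because the problem is quasilinear and set on a manifold rather than a linear space, but this is precisely the setting Henry's theorem (in the form used for such free boundary problems) is designed for; the verification is technical bookkeeping in the maximal-$L_p$-regularity spaces $W^{2-2/p}_p(\Omega_0)\times W^{3-2/p}_p(\R^n)$.

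The main obstacle I anticipate is step (ii): rigorously extracting an unstable eigenvalue from the two-phase Stokes boundary symbol. Unlike the one-phase or pure-transport toy models, here the symbol is only given implicitly through the solution of a coupled ODE system across the interface, with the two viscosities $\mu_1,\mu_2$ and densities $\rho_1,\rho_2$ all entering; one has to establish enough explicit control (monotonicity in $\lambda$, sign at $\lambda=0$, growth at infinity, holomorphy and properness away from a left half-plane) to pin down a real positive zero and to exclude a continuous spectrum creeping into the right half-plane. I would handle this by deriving a closed scalar form of the dispersion function via the explicit resolvent kernels for the Stokes operator in a half-space, then analyzing its real zeros as a function of $s$ on the range where $(\rho_2-\rho_1)\gamma_a s>\sigma s^3$, i.e. $0<s<\sqrt{(\rho_2-\rho_1)\gamma_a/\sigma}$.
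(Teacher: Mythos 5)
Your overall strategy (flatten the interface, analyze the boundary symbol, find a real zero $\lambda(s)>0$ for $0<s<\sqrt{(\rho_2-\rho_1)\gamma_a/\sigma}$, then invoke Henry) tracks the paper closely, and your symbol analysis in step (ii) is essentially the paper's Proposition 3.1. But there is a genuine gap in how you pass from a zero of the symbol to the hypotheses of Henry's theorem. You claim the zero ``yields a genuine unstable eigenvalue of $A_0$ with an associated eigenfunction'' which is ``isolated and of finite multiplicity'', with the rest of the spectrum in a left half-plane, ``so the spectral gap hypothesis of Henry's theorem is met.'' None of this can hold here: the problem is posed on all of $\R^n$, the candidate eigenfunctions $e^{i\xi_0\cdot x}$ are not in $L_p$, and the zeros $\lambda(|\xi|)$ sweep out the entire continuum as $|\xi|$ ranges over $(0,\tau_*)$. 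What is actually true (Proposition 4.1 of the paper) is that each zero is an \emph{approximate} eigenvalue, proved by a wave-packet construction $h_\ve(x)=e^{i\xi_0\cdot x}\chi(\ve x)$ with Fourier support shrinking to $\xi_0$, combined with solving the Stokes problem \eqref{Stokes} and a Dunford functional-calculus estimate of the error; the resulting unstable spectrum is $\sigma(-A)\cap\C_+=[0,\lambda_\infty]$, which touches the imaginary axis at $0$. Hence there is no spectral gap and no isolated finite-multiplicity eigenvalue, and any version of Henry's theorem (or of \cite{Pru03}) that requires such a gap is inapplicable. The paper circumvents this by applying Henry's instability theorem for iterates of a $C^2$ map (Theorem 5.1.5 in \cite{henry}) to the time-one map: its derivative is $e^{-A}$, and the spectral mapping theorem for analytic semigroups gives ${\rm spr}\,(e^{-A})=e^{\lambda_\infty}>1$, which is all that is needed. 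Your argument as written would stall exactly at the point where you assert the eigenfunction and the gap.

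A second, smaller issue: you dismiss the fact that the flow lives on a nonlinear phase manifold as ``technical bookkeeping'', but Henry's theorem is formulated on a ball in a Banach space, and the compatibility conditions \eqref{compatibility} are nonlinear, so one cannot apply it directly. The paper devotes step (i) of Section 6 to parameterizing $\cPM$ analytically over the linear space $X_\gamma^0$ via the implicit function theorem, which in turn requires the solvability theory for the \emph{stationary} problem (Theorem 2.2, whose proof again rests on non-vanishing of $s(\lambda_\ast,\cdot)$ for large $\lambda_\ast$). Only after this parameterization, and after establishing analytic dependence of the solution on the (parameterized) initial data, can one identify the linearization of the time-one map with $e^{-A}$ and close the argument. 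You would need to supply this construction, or an equivalent one, for your proof to go through.
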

%%%%%%%%%%%%%%%%%%%%%%%%%%%
\noindent
Our method depends on the proof of Theorem 1.1 presented in \cite{PrSi09}, as well as on an abstract instability
result for iterates of a mapping due to Henry \cite{henry},  applied here to the Poincar\'e map or time-one-map of the system.
To verify the assumptions in Henry's result we show that the boundary symbol $s(\lambda,\tau)$ admits zeros $(\lambda_0,\tau_0)$ in the unstable half-plane in case $\rho_2>\rho_1$ and prove that such a zero induces the spectral values $\lambda_0$ for the linearized operator of the problem at the trivial equilibrium.

%%%%%%%%%%%%%%%%%%%%%%%%%%%%%%%%%%%%%%%
\section{Reduction to a Flat Interface and Linearization}
The nonlinear problem \eqref{NS-2} can be transformed to a problem
on a fixed domain by means of the transformations
\begin{equation*}
\begin{split}
&v(t,x,y):=(u_1,\ldots,u_n)(t,x,y+h(t,x)), \\
&w(t,x,y):=u_{n+1}(t,x,y+h(t,x)), \\
&\pi(t,x,y):=\tilde q(t,x,y+h(t,x)),
\end{split}
\end{equation*}
where $t\in J=[0,a]$, $x\in \R^n$, $y\in\R$, $y\neq0$.
With a slight abuse of notation we will
in the sequel denote the transformed velocity again
by $u$, that is, we set $u=(v,w)$.
With this notation we obtain the transformed problem
%%%%%%%%%%%%%%%%%%%%%
\begin{equation}
\label{tfbns2}
\left\{
\begin{aligned}
\rho\partial_tu -\mu\Delta u+\nabla \pi&= F(u,\pi,h)
    &\ \hbox{in}\quad &\dot\R^{n+1}\\
{\rm div}\,u&= F_d(u,h)&\ \hbox{in}\quad &\dot\R^{n+1}\\
-[\![\mu\partial_y v]\!] -[\![\mu\nabla_{x}w]\!] &=G_v(u,h)
	&\ \hbox{on}\quad &\R^n\\
-2[\![\mu\partial_y w]\!] +[\![\pi]\!] -(\sigma\Delta + [\![\rho]\!] \gamma_a) h&= G_w(u,h)
	&\ \hbox{on}\quad &\R^n\\
[\![u]\!] &=0 &\ \hbox{on}\quad &\R^n\\
\partial_th-w|_{y=0}&=H(u,h) &\ \hbox{on}\quad &\R^n\\
u(0)&=u_0 &\ \hbox{in}\quad &\dot\R^{n+1}\\
h(0)&=h_0,
\end{aligned}
\right.
\end{equation}
%%%%%%%%%%%%%%%%%%%%%%%%%%%%%%
for $t>0$, where $\dot{\R}^{n+1}:=\{(x,y)\in\R^{n}\times\R:\, y\neq0\}$.
More details on this transformation, on the nonlinear right hand sides,
can be found in  \cite{PrSi09}.
Here we should point out, however, that the definition
of $G_v$ in this paper differs from that in \cite{PrSi09} in the following way:
solving the second line of formula (2.7) in \cite{PrSi09} for $[\![\pi]\!]$
and substituting the result into the expression for $G_v$ in formula (2.8) of \cite{PrSi09}
results in 
% From the second line of \eqref{2.2} follows the relation
% \begin{equation*}
% -[\![\mu\partial_yw]\!]+[\![\pi]\!]-\sigma(\Delta h -G_\kappa(h)) - [\![\rho]\!]\gamma_a h
% =[\![\mu\partial_y w]\!]+G_w(v,w,h)+\sigma G_\kappa(h)
% \end{equation*}       
% which we then substitute into the second line of the definition of $G_v$, resulting in
\begin{equation}
\begin{split}
G_v(v,w,h):&=
-[\![\mu(\nabla_{x}v+(\nabla_{x}v)^{\sf T})]\!]\nabla h
+|\nabla h|^2[\![\mu\partial_yv]\!] \\
&\quad\;\: +\big\{[\![\mu\partial_y w]\!]- (\nabla h|\, [\![\mu\nabla_{x}w]\!])
+ |\nabla h|^2  [\![\mu\partial_yw]\!]
\big\}\nabla h.
\end{split}
\end{equation}
Thus the quantity $[\![\pi]\!]$
can be eliminated in the nonlinearity $G_v$.

%%%%%%%%%%%%%%%%%%%%%%%%%%%
The linearization of (\ref{tfbns2}) at $(u,h)=(0,0)$  leads to the linear inhomogeneous problem
%%%%%%%%%%%%%%%%%%%%%%%%%%%%%
\begin{equation}
\label{linFB}
\left\{
\begin{aligned}
\rho\partial_tu -\mu\Delta u+\nabla \pi&=f
    &\ \hbox{in}\quad &\dot\R^{n+1}\\
{\rm div}\,u&= f_d&\ \hbox{in}\quad &\dot\R^{n+1}\\
-[\![\mu\partial_y v]\!] -[\![\mu\nabla_{x}w]\!]&=g_v
	&\ \hbox{on}\quad &\R^n\\
-2[\![\mu\partial_y w]\!] +[\![\pi]\!] -(\sigma\Delta h +[\![\rho]\!] \gamma_a) h&=g_w
	&\ \hbox{on}\quad &\R^n\\
[\![u]\!] &=0 &\ \hbox{on}\quad &\R^n\\
\partial_th-w|_{y=0}&=g_h &\ \hbox{on}\quad &\R^n\\
u(0)&=u_0 &\ \hbox{in}\quad &\dot\R^{n+1}\\
h(0)&=h_0.
\end{aligned}
\right.
\end{equation}
%%%%%%%%%%%%%%%%%%%%%%%%%%%%%%
We are interested in the regularity class
\begin{equation}
\label{MR}
\begin{split}
&u\in H^{1}_p(J;L_p(\R^{n+1},\R^{n+1}))\cap L_p(J;H^2_p(\dot{\R}^{n+1},\R^{n+1})), \\
&\pi\in L_p(J;\dot{H}^1_p(\dot{\R}^{n+1})),
\end{split}
\end{equation}
where $J=[0,a]$.
In the following, $W^m_p$ denote as usual
the Sobolev spaces if $m\in\mathbb Z$. For non-integer $s$, $W^s_p$
are the Sobolev-Slobodeckii spaces,
and $H^s_p$ the Bessel-potential spaces.
 For $K\in\{H,W\}$, by $\dot{K}^s_p$ we mean the homogeneous version of $K^s_p$.
Note that $H^s_p=W^s_p$ for integer values of $s$,
but that in general these spaces are different.
We refer to \cite[Section 2]{PrSi09} for more details.

If we assume a solution in the class (\ref{MR}),
then for the right hand sides $f$ and $f_d$
we necessarily have $f\in L_p(J\times\R^{n+1},\R^{n+1})$ and
\begin{equation*}
f_d\in H^1_p(J;\dot{H}^{-1}_p(\R^{n+1}))\cap L_p(J;H^1_p(\dot{\R}^{n+1})),
\end{equation*}
since the operator ${\rm div}$ maps $L_p$ into $\dot{H}^{-1}_p$.
By trace theory we necessarily have $u_0\in W^{2-2/p}_p(\dot{\R}^{n+1},\dot\R^{n+1})$,
and the lateral trace of $u$ belongs to
$$Y_0:=W^{1-1/2p}_p(J;L_p(\R^n,\R^{n+1}))\cap L_p(J;W^{2-1/p}_p(\R^n,\R^{n+1})),$$
and that of $\partial_j u$ to
$$Y_1:=W^{1/2-1/2p}_p(J;L_p(\R^n,\R^{n+1}))\cap L_p(J;W^{1-1/p}_p(\R^n,\R^{n+1})),$$
see for instance \cite{DHP07}. 
Therefore $g_v\in Y_1$, and if in addition
$$[\![\pi]\!]\in W^{1/2-1/2p}_p(J;L_p(\R^n))\cap L_p(J;W^{1-1/p}_p(\R^n)),$$
then we also have that $g_w\in Y_1.$

Concerning the regularity of the height function $h$ we note that
 the equation for
$h$ lives in the trace space $Y_0$, hence naturally
$h$ should belong to
$$ h\in W^{2-1/2p}_p(J;L_p(\R^{n}))\cap H^1_p(J;W^{2-1/p}_p(\R^n)).$$
On the other hand, the equation for the normal component of the normal stress lives in $Y_1$,
and contains the term $\Delta h$, hence $h$ should also belong to the space
$ L_p(J;W^{3-1/p}_p(\R^n)).$
These considerations lead to the following natural space for the height function $h$
$$h\in W^{2-1/2p}_p(J;L_p(\R^{n}))\cap H^1_p(J;W^{2-1/p}_p(\R^n))
\cap L_p(J;W^{3-1/p}_p(\R^n)).$$
This then implies $g_h\in Y_0$, as well as $h_0\in W^{3-2/p}_p(\R^n)$ by trace theory.
Our next theorem states that in this setting, problem (\ref{linFB}) admits maximal regularity;
the described regularities of the data are also sufficient. In particular, the solution map
defines an isomorphism between this space of data and the solution space defined above.

%%%%%%%%%%%%%%%%%%%%%%%%%%%%%%%%%%%%%%%
\begin{theorem}
Let $1<p<\infty$ be fixed, $p\neq 3/2,3$, and assume that $\sigma$, $\gamma_a$, $\rho_i$ and $\mu_i$ are positive
constants for $i=1,2$, and set $J=[0,a]$.
Then the instationary Stokes problem with free boundary \eqref{linFB} admits a unique 
solution $(u,\pi,[\![\pi]\!],h)$ with regularity
\begin{equation*}
\label{reg}
\begin{split}
&u\in H^1_p(J;L_p(\R^{n+1},\R^{n+1}))
  \cap L_p(J;H^2_p(\dot{\R}^{n+1},\R^{n+1})), \\
& \pi\in L_p(J;\dot{H}^1_p(\dot{\R}^{n+1})), \\
&[\![\pi]\!]\in W^{1/2-1/2p}_p(J;L_p(\R^{n}))\cap L_p(J;W^{1-1/p}_p(\R^{n})),\\
& h\in W^{2-1/2p}_p(J;L_p(\R^{n}))\cap H^1_p(J;W^{2-1/p}_p(\R^n))
\cap L_p(J;W^{3-1/p}_p(\R^n))
\end{split}
\end{equation*}
if and only if the data
$(f,f_d,g,g_h,u_0,h_0)$
satisfy the following regularity and compatibility conditions: 
\goodbreak
\begin{itemize}
\item[(a)]
$f\in L_p(J;L_p(\R^{n+1},\R^{n+1}))$,
\vspace{1mm}
\item[(b)]
$f_d\in H^1_p(J; \dot{H}^{-1}_p(\R^{n+1}))\cap L_p(J; H^1_p(\dot{\R}^{n+1}))$,
\vspace{1mm}
\item[(c)]
$g=(g_v,g_w)\in W^{1/2-1/2p}_p(J;L_p(\R^{n},\R^{n+1}))
\cap L_p(J;W^{1-1/p}_p(\R^{n},\R^{n+1}))$,
\vspace{1mm}
\item[(d)]
$g_h\in W^{1-1/2p}_p(J;L_p(\R^{n}))\cap L_p(J;W^{2-1/p}_p(\R^{n}))$,
\vspace{1mm}
\item[(e)]
$u_0\in W^{2-2/p}_p(\dot{\R}^{n+1},\R^{n+1})$, $h_0\in W^{3-2/p}_p(\R^n)$,
\vspace{1mm}
\item[(f)]
${\rm div}\, u_0=f_d(0)$ in $\,\dot\R^{n+1}$ and $[\![u_0]\!]=0$
on $\,\R^n$ if $p>3/2$,
\vspace{1mm}
\item[(g)]
$-[\![\mu\partial_y v_0]\!] -[\![\mu\nabla_{x}w_0]\!] ={g_v}(0)$ on
$\,\R^n$ if $p>3$.
\end{itemize}
The solution map $[(f,f_d,g,g_h, u_0,h_0)\mapsto (u,\pi, [\![\pi]\!], h)]$ 
is continuous between the corresponding spaces.
\end{theorem}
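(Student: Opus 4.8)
The plan is to prove the two implications of the stated equivalence separately, the necessity of the regularity and compatibility conditions (a)--(g) being the routine direction and the sufficiency the substantive one. For necessity, assume $(u,\pi,[\![\pi]\!],h)$ lies in the indicated class; then (a)--(e) are read off from the mapping properties of $\partial_t$, $\mu\Delta$, $\nabla$ and of $\mathrm{div}\colon L_p\to\dot H^{-1}_p$, together with the trace assertions recalled just before the theorem: the lateral traces of $u$ and $\partial_j u$ lie in $Y_0$ and $Y_1$, the time traces land in the indicated Slobodeckii spaces, and $h$ and $[\![\pi]\!]$ carry the traces forced by their regularity. The compatibility conditions (f)--(g) then follow by evaluating the divergence constraint, the jump condition $[\![u]\!]=0$ and the tangential stress balance at $t=0$; these evaluations are meaningful exactly for $p>3/2$, resp.\ $p>3$, by the usual trace thresholds, which is why they are imposed only in those ranges.

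For sufficiency I would first strip off the bulk and initial data. Solving the instationary two-phase Stokes transmission problem $\rho\partial_tu-\mu\Delta u+\nabla\pi=f$, $\mathrm{div}\,u=f_d$ on $\dot{\R}^{n+1}$ with $[\![u]\!]=0$, $u(0)=u_0$ and a suitable Dirichlet trace for $u$ on $\R^n$ chosen to be compatible with (f)--(g) --- a problem enjoying maximal $L_p$-regularity, cf.\ \cite{DHP07} --- and subtracting its solution (together with an elementary time-extension of $h_0$) reduces \eqref{linFB} to the case $f=f_d=0$, $u_0=0$, $h_0=0$ with only the boundary inhomogeneities $(g_v,g_w,g_h)$ left, still lying in the spaces (c)--(d) and with vanishing time trace of $g_v$ where $p>3$. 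On this reduced problem I would apply the Fourier transform in $x\in\R^n$ and the Laplace transform in $t$: writing $\xi$ for the tangential covariable and $\tau=|\xi|$, the bulk equations become on each half-line $\pm y>0$ a constant-coefficient ODE system whose bounded solutions span a finite-dimensional space with exponential decay rates $\tau$ and $(\tau^2+\rho\lambda/\mu)^{1/2}$, and $[\![u]\!]=0$ together with the two dynamic conditions couples the free constants to $[\![\pi]\!]$ and $\hat h$ through a square matrix $M(\lambda,\xi)$ whose determinant is, up to a nonvanishing factor, the boundary symbol $s(\lambda,\tau)$ of the later sections. Since on the bounded interval $J$ one may replace $u$ by $e^{-\omega t}u$, it suffices to control $M(\lambda,\xi)^{-1}$ for $\mathrm{Re}\,\lambda\ge\omega_0$ with $\omega_0$ large; in that regime the gravity contribution $[\![\rho]\!]\gamma_a$ is dominated by $\sigma\tau^2$ and by the parabolic scale and hence is harmless, and the resulting homogeneity and decay bounds for $M^{-1}$, fed into the operator-valued Fourier multiplier theorem via $\mathcal R$-boundedness of the associated symbol families, give boundedness of the solution operator between the data and solution spaces; the component $[\![\pi]\!]$ is read off explicitly from the symbol and inherits exactly the stated regularity.

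In fact, consistent with the remark following \thmref{th:1.1}, I would prefer to bypass the symbol computation and instead invoke the $\gamma_a=0$ version of this theorem, which is contained in \cite{PrSi09}: problem \eqref{linFB} with $\gamma_a>0$ is \eqref{linFB} with $\gamma_a=0$ and $g_w$ replaced by $g_w+[\![\rho]\!]\gamma_a h$. Since $h\mapsto[\![\rho]\!]\gamma_a h$ carries no derivative of $h$, it maps the $h$-component of the solution space continuously into $Y_1$ and is there of strictly \emph{lower order}; composing with the $\gamma_a=0$ solution operator and running a Neumann series on a short time interval --- the gain coming from the vanishing time trace of $h$ after the reduction above --- and then concatenating over $J$, or alternatively recasting \eqref{linFB} as an abstract evolution equation and applying the standard perturbation theorem for maximal regularity to this lower-order term, upgrades the $\gamma_a=0$ isomorphism to the asserted one for $\gamma_a>0$. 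Continuity of the solution map then follows from the open mapping theorem, and one notes that no sign condition on $\rho_2-\rho_1$ enters here, consistently with the later instability statement.

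The main obstacle is the symbol analysis of the direct route: invertibility of $M(\lambda,\xi)$ for $\mathrm{Re}\,\lambda\ge\omega_0$ and the quantitative $\mathcal R$-bounds demanded by the multiplier theorem. This is, however, much milder than the zero-set analysis of the subsequent sections, since on a bounded time interval only the large-$(\lambda,\tau)$ regime is relevant, where \eqref{linFB} is a lower-order perturbation of the parabolic model problem. If one instead follows the shortcut, the only delicate point is to make ``lower order'' quantitative, i.e.\ to bound $[\![\rho]\!]\gamma_a h$ in $Y_1$ with a constant that is small on short intervals or after a spectral shift; this follows from the embedding of the $h$-space into $Y_1$ with a gain in regularity.
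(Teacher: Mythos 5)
Your proposal is correct, and your preferred route is in substance the paper's own proof: the paper disposes of the theorem in two sentences by citing \cite{PrSi09} for the case $\gamma_a=0$ and observing that the extra term $[\![\rho]\!]\gamma_a h$ is of lower order, so that the argument carries over with the boundary symbol replaced by its gravity-modified version. Where you differ is only in how the phrase ``lower order'' is implemented: the paper (implicitly) reruns the symbol analysis of \cite{PrSi09} with the new symbol $s(\lambda,|\xi|)$, which is harmless for the maximal regularity statement on a finite interval since after an exponential shift one only needs the symbol for $\mathrm{Re}\,\lambda$ large, where the gravity term cannot create zeros; you instead keep the $\gamma_a=0$ solution operator as a black box and absorb $h\mapsto[\![\rho]\!]\gamma_a h$ by a Neumann series on short subintervals (using the zero time trace after the reduction to homogeneous bulk and initial data, so that the embedding of the $h$-solution class into $Y_1$ has a small constant) followed by concatenation, or by the abstract perturbation theorem for maximal regularity. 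Both mechanisms are sound; your version has the merit of making the ``lower order'' assertion quantitative without touching the symbol, while the paper's version (and your alternative direct route via the Fourier--Laplace symbol matrix, which is essentially what \cite{PrSi09} does) keeps the explicit symbol information that is then reused in Sections 3--4. The necessity direction via trace theory and the remaining bookkeeping (the exclusions $p\neq 3/2,3$, conditions (f)--(g), independence of the sign of $\rho_2-\rho_1$) are handled correctly; the only slightly loose points are the choice of the auxiliary trace data in your reduction step and the remark that ``only the large $(\lambda,\tau)$ regime is relevant'' (small $\tau$ remains relevant but is harmless once $\mathrm{Re}\,\lambda\ge\omega_0$), neither of which affects the validity of the argument.
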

%%%%%%%%%%%%%%%%%%%%%%%%%%%%%%%%%%%%%%%%%%%
\begin{proof}
For a detailed  proof of Theorem 2.1 in case $\gamma_a=0$ we refer to \cite{PrSi09}. The proof carries over to the case $\gamma_a>0$ since the term $[\![\rho]\!]\gamma_a h$ is of lower order. The only change occurs in the boundary symbol $s(\lambda,|\xi|)$; see Section 3.
\end{proof}
%%%%%%%%%%%%%%%%%%%%%%%%%%%%%%%%%%%%% 
We also need a corresponding result for the stationary linear problem
\begin{equation}
\label{linstatFB}
\left\{
\begin{aligned}
\rho\lambda_\ast u -\mu\Delta u+\nabla \pi&=0
    &\ \hbox{in}\quad &\dot\R^{n+1}\\
{\rm div}\,u&= f_d&\ \hbox{in}\quad &\dot\R^{n+1}\\
-[\![\mu\partial_y v]\!] -[\![\mu\nabla_{x}w]\!]&=g_v
	&\ \hbox{on}\quad &\R^n\\
-2[\![\mu\partial_y w]\!] +[\![\pi]\!] -(\sigma\Delta h + [\![\rho]\!] \gamma_a) h&=g_w
	&\ \hbox{on}\quad &\R^n\\
[\![u]\!] &=0 &\ \hbox{on}\quad &\R^n\\
\lambda_\ast h-w|_{y=0}&=g_h &\ \hbox{on}\quad &\R^n,
\end{aligned}
\right.
\end{equation}
where $\lambda_\ast>0$ is sufficiently large. It reads as follows.
%%%%%%%%%%%%%%%%%%%%%%%%%%%%%%%%%%%%
\begin{theorem}
\label{thm2.2}
Let $1<p<\infty$ be fixed, and assume that $\sigma$, $\gamma_a$, $\rho_i$ and $\mu_i$ are positive
constants for $i=1,2$, and that $\lambda_\ast>0$ is large enough.
Then the stationary Stokes problem with free boundary \eqref{linstatFB} admits a unique solution $(u,\pi, h)$ with regularity
\begin{equation}
\begin{aligned}
&u\in W^{2-2/p}_p(\dot{\R}^{n+1};\R^{n+1}),\quad &&\pi\in \dot{W}^{1-2/p}_p(\dot{\R}^{n+1}),\\
&[\![\pi]\!]\in W^{1-3/p}_p(\R^{n}),\quad && h\in W^{3-3/p}_p(\R^n)),\nonumber
\end{aligned}
\end{equation}
if and only if the data
$(f,f_d,g,g_h)$
satisfy the following regularity  conditions:
\begin{itemize}
\item[(a)]
$f_d\in  W^{1-2/p}_p(\dot{\R}^{n+1})\cap \dot{H}^{-1}_p(\R^{n+1})$,
\vspace{1mm}
\item[(b)]
$g=(g_v,g_w)\in W^{1-3/p}_p(\R^{n};\R^{n+1})$,
\vspace{1mm}
\item[(c)]
$g_h\in W^{2-3/p}_p(\R^{n})$.
\end{itemize}
The solution map $[(f_d,g,g_h)\mapsto (u,\pi, [\![\pi]\!], h)]$ 
is continuous between the corresponding spaces.
\end{theorem}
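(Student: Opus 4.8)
The plan is to deduce \thmref{thm2.2} from \thmref{} (the time-dependent problem on $J=[0,a]$) by a Laplace-transform / trace argument, rather than redoing the full symbol analysis. The key observation is that the stationary problem \eqref{linstatFB} is exactly the resolvent problem one obtains from \eqref{linFB} by plugging in $u(t)=e^{\lambda_\ast t}u$, $\pi(t)=e^{\lambda_\ast t}\pi$, $h(t)=e^{\lambda_\ast t}h$, and the right-hand sides $f=0$, $f_d(t)=e^{\lambda_\ast t}f_d$, $g(t)=e^{\lambda_\ast t}g$, $g_h(t)=e^{\lambda_\ast t}g_h$. First I would take data $(f_d,g,g_h)$ in the spaces (a)--(c) of \thmref{thm2.2} and extend them to $J$-dependent data by the exponential ansatz; one checks these lie in the classes (b), (c), (d) of \thmref{} — here the time-regularity is automatic since $e^{\lambda_\ast t}$ is smooth on $[0,a]$, and the spatial regularity is precisely what is demanded, using that $W^{1-3/p}_p(\R^n)$ is the trace space of $L_p(J;W^{1-1/p}_p)\cap W^{1/2-1/2p}_p(J;L_p)$ at $t=0$ (and analogously for $g_h$ with the shift by one full derivative). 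The compatibility conditions (f), (g) of \thmref{} become precisely the requirement that $(u_0,h_0)$ — which we will \emph{construct} — satisfy them, so they impose no constraint on the given stationary data beyond what is listed.

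The second step is to solve, using \thmref{}, for $(u(\cdot),\pi(\cdot),[\![\pi]\!](\cdot),h(\cdot))$ on $J$ and then argue that the solution must itself be of the form $e^{\lambda_\ast t}\times(\text{stationary profile})$. This is a uniqueness argument: if $(u,\pi,[\![\pi]\!],h)$ solves \eqref{linFB} with the exponential data, then so does $e^{\lambda_\ast s}$ times the time-shifted solution by $s$ (after adjusting the initial condition); since $\lambda_\ast$ is chosen large enough, \eqref{linstatFB} and the exponential-weighted version of \eqref{linFB} have no nontrivial homogeneous solution, so the time-shift semigroup acts by scalar multiplication $e^{\lambda_\ast s}$, which forces $u(t)=e^{\lambda_\ast t}u(0)$, etc. Alternatively — and this is cleaner — I would observe that $v(t):=e^{-\lambda_\ast t}u(t)$, etc., solves the \emph{autonomous} parabolic problem \eqref{linFB} with $\lambda_\ast$ shifted into the leading part, with \emph{time-independent} data; by maximal regularity the stationary solution of \thmref{thm2.2}, extended constantly in $t$, is a solution, and by the uniqueness clause in \thmref{} it is \emph{the} solution; reading off the trace at any fixed $t$ and undoing the weight gives the stationary solution with the asserted regularity, and its continuous dependence on the data. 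The trace spaces then match up: $u(0)\in W^{2-2/p}_p$, $h(0)\in W^{3-2/p}_p$, but since the profile is stationary one gets the better exponents $W^{2-2/p}_p\to$ nothing extra for $u$ (already sharp), while for $h$ one must work a little harder to upgrade $W^{3-2/p}_p$ to $W^{3-3/p}_p$.

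That last point — obtaining the \emph{sharp} elliptic spatial regularity $u\in W^{2-2/p}_p$, $\pi\in\dot W^{1-2/p}_p$, $h\in W^{3-3/p}_p$, $[\![\pi]\!]\in W^{1-3/p}_p$ rather than the weaker trace regularity inherited from the parabolic problem — is where the real work sits, and I expect it to be the main obstacle. The parabolic maximal-regularity theorem only gives $h(0)\in W^{3-2/p}_p$, which is half a derivative short of $W^{3-3/p}_p$; the gap is bridged by exploiting that the profile is stationary, so that $h\in W^{3-1/p}_p(\R^n)$ uniformly in $t$ (from the $L_p(J;W^{3-1/p}_p)$ membership of a $t$-constant function, which just says $h\in W^{3-1/p}_p$), and then interpolating, or rather by invoking elliptic regularity for the resolvent Stokes system with the Laplace-symbol analysis of \secref{} directly: the boundary symbol $s(\lambda_\ast,|\xi|)$ is, for $\lambda_\ast$ large, bounded below by $c(|\xi|^3+\lambda_\ast^{3/2}+\dots)$ (the same lower bound that makes \thmref{} work), and this invertibility at fixed $\lambda=\lambda_\ast$ is exactly an elliptic estimate yielding the claimed spaces via Mikhlin-type multiplier bounds on $\R^n$. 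I would therefore present the proof as: (i) reduce existence and uniqueness to \thmref{} via the exponential ansatz, getting a solution with parabolic-trace regularity; (ii) run the stationary symbol computation of \secref{} at $\lambda=\lambda_\ast$ to upgrade to the sharp elliptic regularity and to get the continuous solution operator; the bulk terms $u,\pi$ are handled by the standard resolvent estimate for the two-phase Stokes operator with large $\lambda_\ast$, which is classical and can be cited. The necessity of conditions (a)--(c) is immediate from trace theory applied to the asserted regularity of $(u,\pi,h)$, exactly as in the discussion preceding \thmref{} but one half-derivative down in time-scaling.
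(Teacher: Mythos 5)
Your plan to derive Theorem \ref{thm2.2} from the parabolic result (Theorem 2.1) via the exponential ansatz breaks down already at the first step, and it breaks down for a structural reason. The data of Theorem \ref{thm2.2} live in the \emph{initial-trace} spaces of the parabolic data classes: $g\in W^{1-3/p}_p(\R^n)$ versus the spatial regularity $W^{1-1/p}_p(\R^n)$ required in class (c) of Theorem 2.1, $g_h\in W^{2-3/p}_p$ versus $W^{2-1/p}_p$, and $f_d\in W^{1-2/p}_p$ versus $H^1_p$. A time-independent profile multiplied by $e^{\lambda_\ast t}$ lies in $L_p(J;W^{1-1/p}_p)\cap W^{1/2-1/2p}_p(J;L_p)$ only if the profile itself is in $W^{1-1/p}_p$; the trace-space characterization you invoke goes the wrong way — it says that elements of the anisotropic class have traces in $W^{1-3/p}_p$, not that a spatially rough, temporally smooth extension belongs to the anisotropic class (the extensions realizing the trace theorem regularize in space for $t>0$, e.g.\ semigroup extensions, and are not of product form). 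So for data merely in (a)--(c) of Theorem \ref{thm2.2} you cannot feed the exponential extension into Theorem 2.1 at all; repairing this by mollifying the data and passing to the limit would require uniform estimates precisely in the weak stationary norms, i.e.\ the elliptic estimate you are trying to prove. There is a second circularity: to apply Theorem 2.1 you must supply initial values $(u_0,h_0)$ satisfying the compatibility conditions ${\rm div}\,u_0=f_d(0)$ and $-[\![\mu\partial_y v_0]\!]-[\![\mu\nabla_x w_0]\!]=g_v(0)$, and the only natural candidate is the stationary solution itself, which is what is being constructed. Also note that the embedding direction in your ``upgrade'' remark is reversed: $W^{3-2/p}_p(\R^n)\hookrightarrow W^{3-3/p}_p(\R^n)$, so the solution regularity would not be the obstruction even if the detour worked — the obstruction sits entirely on the data side.

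What actually carries the proof — and what the paper does — is the fallback you mention only at the end: necessity follows from trace theory, and for sufficiency one runs the stationary reductions of \cite{PrSi09}, Sections 3--5, at the fixed parameter $\lambda_\ast$, so that everything reduces to inverting the boundary symbol $s(\lambda_\ast,|\xi|)$. For $\lambda_\ast>\lambda_\infty$ one has the two-sided bound $c_0(1+|\tau|)\le |s(\lambda_\ast,\tau)|\le c_1(1+|\tau|)$ for $\tau$ in a small sector $\Sigma_\eta$ (note: the symbol grows \emph{linearly} in $|\xi|$ at fixed $\lambda_\ast$, since the denominator in \eqref{bdrysymb} is of order $|\xi|$; your guessed lower bound $c(|\xi|^3+\lambda_\ast^{3/2}+\dots)$ is not the right normalization of this reduced symbol), whence $(1+|\xi|)/s(\lambda_\ast,|\xi|)$ is a Mikhlin multiplier on $H^s_p(\R^n)$ and $W^s_p(\R^n)$ for all $s$, giving exactly the one-derivative gain and the continuity of the solution map in the stated spaces. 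In short: the parabolic detour is both unavailable in the given data classes and redundant, since its repair is the direct symbol argument.
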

%%%%%%%%%%%%%%%%%%%
\begin{proof}
The proof will be given at the end of the next section.
\end{proof}

%%%%%%%%%%%%%%%%%%%%%%%%%%%%%%%%%%%%%%%%%%%%%%%%%%%%%%%%%%%%%%%%%%%
\section{Zeros of the Boundary Symbol}
%%%%%%%%%%%%%%%%%%%%%%%%%%%%%%%%%%%%%%%%%%%%%%%%%%%%%%%%%%%%%%%%%%%

As shown in our paper \cite{PrSi09a}, the boundary symbol of the linear problem is given by
\begin{equation}
\label{bdrysymb}
s(\lambda,|\xi|):=\lambda +\frac{\sigma|\xi|^2-[\![\rho]\!]\gamma_a}{(\rho_1+\rho_2)\lambda/|\xi|
+ 4\eta_1\eta_2/(\eta_1+\eta_2)}.
\end{equation}
Here $\lambda$ denotes the co-variable of time $t$ and $\xi$ that of the tangential space variable $x\in\R^n$, and we employed the abreviations
\begin{equation*}
\omega_j=\sqrt{\rho_j\lambda +\mu_j|\xi|^2},\quad \eta_1= \sqrt{\mu_1}\omega_1 + \mu_2 |\xi|, \quad  \eta_2= \sqrt{\mu_2}\omega_2 + \mu_1 |\xi|.
\end{equation*}
The boundary symbol $s(\lambda,|\xi|)$ has been studied in detail in the papers 
\cite{PrSi08, PrSi09} in case $\gamma_a=0$, and in
\cite{PrSi09a} for $\gamma_a>0$. 
It has been shown in \cite[Remarks 3.2(b),(c)]{PrSi09a} 
that $s(\lambda,\tau)$ does not admit zeros $(\lambda,\tau)\neq(0,0)$ with
${\rm Re} \lambda\geq0$ and $\tau\ge 0$ in case $\rho_2\leq\rho_1$, i.e. in 
the stable case. On the other hand we have the following result.
%%%%%%%%%%%%%%%%%%%%%%
\begin{proposition} 
Suppose $\rho_2>\rho_1>0$ and $\mu_1,\mu_2,\sigma,\gamma_a>0$ are constants. 
Then for each $\xi\in\R^n$ with 
$$0<|\xi|< \tau_*:=(\gamma_a[\![\rho]\!]/\sigma)^{1/2}$$ 
there is $\lambda(|\xi|)>0$ such that $s(\lambda(|\xi|),|\xi|)=0.$ Every zero
of $s(\lambda,|\xi|)$ with ${\rm Re}\lambda\geq0$ is real.
\end{proposition}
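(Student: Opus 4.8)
The plan is to analyze the transcendental equation $s(\lambda,\tau)=0$ for $\tau:=|\xi|$ fixed, separating the two assertions. Write $s(\lambda,\tau)=0$ as $\lambda\big[(\rho_1+\rho_2)\lambda/\tau + 4\eta_1\eta_2/(\eta_1+\eta_2)\big] = [\![\rho]\!]\gamma_a - \sigma\tau^2$, and denote the right-hand side by $c(\tau):=[\![\rho]\!]\gamma_a-\sigma\tau^2$, which is strictly positive precisely when $0<\tau<\tau_*$. For the existence of a positive real zero, I would restrict attention to $\lambda\in[0,\infty)$ and set $\Phi(\lambda):=\lambda\big[(\rho_1+\rho_2)\lambda/\tau + 4\eta_1(\lambda)\eta_2(\lambda)/(\eta_1(\lambda)+\eta_2(\lambda))\big]$. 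On $[0,\infty)$ the quantities $\omega_j=\sqrt{\rho_j\lambda+\mu_j\tau^2}$ are real, positive, continuous and increasing in $\lambda$, hence so are $\eta_1,\eta_2$ and the harmonic-type combination $4\eta_1\eta_2/(\eta_1+\eta_2)$; therefore $\Phi$ is continuous and strictly increasing on $[0,\infty)$, with $\Phi(0)=0$ and $\Phi(\lambda)\to\infty$ as $\lambda\to\infty$ (the leading term $(\rho_1+\rho_2)\lambda^2/\tau$ dominates). By the intermediate value theorem there is a unique $\lambda(\tau)>0$ with $\Phi(\lambda(\tau))=c(\tau)>0$, which gives the claimed positive zero.

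For the second assertion — that \emph{every} zero with ${\rm Re}\,\lambda\ge 0$ is real — I would argue by contradiction: suppose $\lambda=a+ib$ with $a\ge 0$, $b\neq 0$, and $s(\lambda,\tau)=0$, so $\lambda\,D(\lambda)=c(\tau)$ where $D(\lambda):=(\rho_1+\rho_2)\lambda/\tau+4\eta_1\eta_2/(\eta_1+\eta_2)$. The key structural fact to exploit is that on the closed right half-plane the branch $\omega_j=\sqrt{\rho_j\lambda+\mu_j\tau^2}$ (principal branch) has ${\rm Re}\,\omega_j>0$ and, more usefully, ${\rm arg}\,\omega_j\in(-\pi/4,\pi/4)$ whenever ${\rm arg}\lambda\in(-\pi/2,\pi/2)$, because $\rho_j\lambda+\mu_j\tau^2$ then lies in the right half-plane and taking square roots halves the argument. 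Consequently $\eta_j=\sqrt{\mu_j}\,\omega_j+\mu_{3-j}\tau$ is a sum of a number with argument in $(-\pi/4,\pi/4)$ and a positive real, so ${\rm arg}\,\eta_j\in(-\pi/4,\pi/4)$ as well; the same sector is then inherited by the product $\eta_1\eta_2$ divided by the sum $\eta_1+\eta_2$ (product of two sector elements has argument in $(-\pi/2,\pi/2)$, but one checks the harmonic combination stays in $(-\pi/4,\pi/4)$). Likewise $(\rho_1+\rho_2)\lambda/\tau$ lies in the open right half-plane. Thus $D(\lambda)$ is a sum of two numbers each with argument in $(-\pi/2,\pi/2)$, hence ${\rm Re}\,D(\lambda)>0$, and in fact one can pin down ${\rm arg}\,D(\lambda)$ to have the same sign as ${\rm arg}\,\lambda$ (here is where the refined sector $(-\pi/4,\pi/4)$ for $\eta_1\eta_2/(\eta_1+\eta_2)$ matters). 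Then ${\rm arg}(\lambda D(\lambda))={\rm arg}\lambda+{\rm arg}D(\lambda)$ is nonzero and lies in $(-\pi,\pi)\setminus\{0\}$ — in particular $\lambda D(\lambda)$ is not a positive real number — contradicting $\lambda D(\lambda)=c(\tau)>0$ (when $0<\tau<\tau_*$) or, if $\tau\ge\tau_*$, giving $c(\tau)\le 0$ while ${\rm Re}(\lambda D(\lambda))>0$ forces $c(\tau)>0$ unless $\lambda=0$. In all cases a nonreal zero in the right half-plane is excluded.

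I expect the genuine obstacle to be the second part, specifically the argument bookkeeping for $D(\lambda)$: one must verify carefully that the combination $4\eta_1\eta_2/(\eta_1+\eta_2)$ does not merely lie in the right half-plane but in the sector $|{\rm arg}|<\pi/4$ (equivalently, that it cannot rotate $\lambda D(\lambda)$ back onto the positive real axis), and one must handle the boundary case ${\rm Re}\,\lambda=0$, $\lambda\neq 0$ — where $\rho_j\lambda+\mu_j\tau^2$ sits strictly inside the right half-plane as long as $\tau>0$, so the argument for $\omega_j$ still lands in $(-\pi/4,\pi/4)$ and the proof goes through. The positive-real part, by contrast, is essentially a monotonicity-plus-IVT exercise and should be routine. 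A clean way to package the sector estimates is to record once and for all that for ${\rm Re}\,\lambda\ge 0$ with $\lambda\ne 0$ and $\tau>0$ one has $\omega_j\in\Sigma_{\pi/4}$ (the open sector of half-angle $\pi/4$ about $\R_+$), that $\Sigma_{\pi/4}+\R_+\subset\Sigma_{\pi/4}$, that the map $z\mapsto 1/z$ preserves $\Sigma_{\pi/4}$, and that $z,w\in\Sigma_{\pi/4}\Rightarrow 2zw/(z+w)\in\Sigma_{\pi/4}$; the last of these is the only nontrivial closure property and follows from $1/\big(2zw/(z+w)\big)=\tfrac12(1/z+1/w)$ together with convexity of the sector $\Sigma_{\pi/4}$.
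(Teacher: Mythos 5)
Your existence argument is sound and is in substance the paper's own: after the rescaling $\zeta=\lambda/\tau^2$ the paper applies exactly the same monotonicity-plus-intermediate-value-theorem step to the strictly increasing function $\Phi(\zeta)=\zeta/k(\zeta)$, which coincides (up to scaling) with your $\lambda\mapsto\lambda D(\lambda)$. The problem is in the second part, and it sits precisely at the step you flagged but did not actually close: the symmetric sector bound $4\eta_1\eta_2/(\eta_1+\eta_2)\in\Sigma_{\pi/4}$ does \emph{not} imply that $\arg D(\lambda)$ has the same sign as $\arg\lambda$. If $\arg\lambda$ is a small positive angle, a harmonic-mean term whose argument lies anywhere in $(-\pi/4,0)$ could a priori dominate the sum and give $\arg D(\lambda)=-\arg\lambda$, i.e.\ $\lambda D(\lambda)\in(0,\infty)$; none of the closure properties of the symmetric sector you list ($\Sigma_{\pi/4}+\R_+\subset\Sigma_{\pi/4}$, stability under inversion and under $2zw/(z+w)$) excludes this cancellation. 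So, as written, the decisive assertion ``$\arg D$ has the same sign as $\arg\lambda$'' is unsupported, and with only the information you prove, the conclusion could fail.

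The statement you need is the one-sided, sign-preserving refinement, and it is true by the very same bookkeeping: if $\mathrm{Re}\,\lambda\ge0$, $\mathrm{Im}\,\lambda\ge0$, $\lambda\ne0$ and $\tau>0$, then $\rho_j\lambda+\mu_j\tau^2$ lies in the closed upper-right quadrant, so $\arg\omega_j\in[0,\pi/4)$ (halving an argument preserves its sign); adding the positive real $\mu_{3-j}\tau$ keeps $\arg\eta_j\in[0,\pi/4)$; and the inversion identity $2zw/(z+w)=2/(1/z+1/w)$ maps the half-sector $\{0\le\arg<\pi/4\}$ into itself (inversion sends it to $\{-\pi/4<\arg\le0\}$, the sum stays there, inverting back returns). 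Hence the harmonic-mean term has nonnegative imaginary part whenever $\mathrm{Im}\,\lambda\ge0$, so $\mathrm{Im}\,D(\lambda)>0$ whenever $\mathrm{Im}\,\lambda>0$, giving $\arg(\lambda D(\lambda))\in(0,\pi)$ and thus $\lambda D(\lambda)\notin\R$, which contradicts $\lambda D(\lambda)=c(\tau)$; the lower half-plane follows by conjugation, and purely imaginary $\lambda=ib$ is covered since then $\mathrm{Im}(\lambda D)=b\,\mathrm{Re}\,D\ne0$. This one-sided fact is exactly the bound $\arg k(\zeta)\in(-\pi/2,0]$ for $\arg\zeta\in[0,\pi/2]$ that the paper imports from \cite{PrSi08}, so the repair is short, but it must replace your symmetric-sector reasoning. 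Two minor further points: your claim $\mathrm{Re}(\lambda D(\lambda))>0$ is false in general (take $\lambda$ purely imaginary and large), though it is not needed once the imaginary-part argument is in place; and note that $\mathrm{Re}\,D(\lambda)>0$ is also what legitimizes rewriting $s(\lambda,\tau)=0$ as $\lambda D(\lambda)=c(\tau)$ in the first place.
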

%%%%%%%%%%%%%%%%%%%%%%%%%%
\begin{proof} Note first that $s(\lambda,0)\neq 0$ unless $\lambda=0$, hence we may assume $\xi\neq0$ below.
It is convenient to the use the scaling $\zeta=\lambda/\tau^2$ where $\tau=|\xi|\in(0,\infty)$. By a slight abuse of notation we set
\begin{equation*}
\omega_j(\zeta)=\sqrt{\rho_j \zeta +\mu_j},\quad \eta_1(\zeta)= \sqrt{\mu_1}\omega_1(\zeta) + \mu_2, \quad  \eta_2(\zeta)= \sqrt{\mu_2}\omega_2(\zeta) + \mu_1,
\end{equation*}
and obtain
\begin{equation*}
s(\lambda,\tau)= \tau^2 (\zeta+ \psi(\tau)k(\zeta)),
\end{equation*}
where
\begin{equation*}
\psi(\tau)= \frac{\sigma}{(\rho_1+\rho_2)\tau}-\frac{(\rho_2-\rho_1)\gamma_a}{(\rho_1+\rho_2)\tau^3},
\end{equation*}
and
\begin{equation*}
\frac{1}{k(\zeta)} = \zeta+ \frac{4}{\rho_1+\rho_2}\frac{\eta_1(\zeta)\eta_2(\zeta)}{\eta_1(\zeta)+\eta_2(\zeta)}.
\end{equation*}
Thus $(\lambda,\tau)$ is a zero of $s$ if and only if $(\zeta,\tau)$ satisfies
$ \zeta+\psi(\tau)k(\zeta)=0$. 
It has been shown in \cite{PrSi08} that ${\rm arg}\, k(\zeta)\in(-\pi/2,0]$ 
if ${\rm arg} \, \zeta\in [0,\pi/2]$. This implies that for
 ${\rm Re}\, \zeta\geq0$, $\zeta\neq0$ we have
$\zeta+\psi(\tau)k(\zeta)\neq 0$ if either $\tau\geq\tau_*$ or $\zeta$ is non-real. Thus we need to show that for $\tau\in(0,\tau_*)$ there is a a zero $\zeta(\tau)>0$ of $ \zeta+\psi(\tau)k(\zeta)=0$. But the function $\Phi(\zeta):=\zeta/k(\zeta)$ is nonnegative and strictly increasing for $\zeta\geq0$, it is zero at $\zeta=0$ and behaves like $ \zeta^2$ as $t\to\infty$, hence $\Phi(0,\infty)\supset(0,\infty)$. On the other hand, $-\psi(0,\tau_*)= (0,\infty)$ implies that for each $\tau\in(0,\tau_*)$ there is a unique $\zeta(\tau)>0$ such that $\Phi(\zeta(\tau))=-\psi(\tau)$, i.e.\ $(\zeta(\tau),\tau)$ is a zero of the function $\zeta+\psi(\tau)k(\zeta)$, which yields the zeros $(\zeta(|\xi))|\xi|^2,\xi)$ of the boundary symbol $s(\lambda,|\xi|)$.
\end{proof}
The critical value $\tau_*=\sqrt{[\![\rho]\!]\gamma_a/\sigma}$ is known as the {\em cutoff wave number} in the literature, cp.\ e.g.\ \cite{ElAs97}.
Note that $\Phi(\zeta)\sim \zeta^2$ as $\zeta\to\infty$,
while $\Phi(\zeta)\sim \zeta/k(0)$ as $\zeta\to 0$. 
This gives the asymptotics
\begin{equation*}
\zeta(\tau)\sim \sqrt{[\![\rho]\!]\gamma_a/(\rho_1+\rho_2)}\tau^{-3/2}, 
\;  \tau\to 0;
\qquad \zeta(\tau)\sim c (\tau_*-\tau),\; \tau\to \tau_*, 
\end{equation*}
with $c= 2\sigma k(0)/(\rho_1+\rho_2)\tau_*^2$, which implies for $\lambda(\tau)=\tau^2 z(\tau)$
\begin{equation*}
\lambda(\tau)\sim \sqrt{[\![\rho]\!]\gamma_a/(\rho_1+\rho_2)}\tau^{1/2}, 
\; \tau\to 0;
\qquad \lambda(\tau)\sim \frac{\sigma}{\mu_1+\mu_2} (\tau_*-\tau),\; \tau\to \tau_*. \end{equation*}
Thus $\lambda(\tau)\to 0$ as $\tau\to 0,\tau_*$, hence the function 
$[\tau\to\lambda(\tau)]$ has a positive absolute maximum in the interval
$(0,\tau_*)$ which we denote by $\lambda_\infty>0$ in the sequel.
\bigskip\\
%%%%%%%%%%%%%%%%%%%%%%%%%%
{\sl Proof of Theorem 2.2:} 
Necessity is obtained by trace theory. To prove sufficiency,
we can use the same reductions as in \cite{PrSi09}, Sections 3-5,
with the notable difference that here we only need to consider
the stationary case with $\lambda_\ast$ a fixed parameter. 
As in the proof of \cite[Theorem 5.1]{PrSi09} it then
remains to consider the boundary symbol $s(\lambda_\ast,|\xi|)$. 
% by means of the same reductions as in \cite{PrSi09}, Sections 3-5, 
% we only need to consider the boundary symbol $s(\lambda_\ast,|\xi|)$. 
We have seen above that for $\lambda_\ast>\lambda_\infty$ the boundary symbol does not vanish, and we may estimate $s(\lambda_\ast,\tau)$ from above and below by $c_j(1+|\tau|)$
with appropriate positive constants $c_j,\ j=0,1$. This is valid
for 
$$\tau\in \Sigma_\eta=\{\zeta\in \C:\, \zeta\ne 0,\; |{\arg}\,\zeta|<\eta\}$$ 
with sufficiently small $\eta>0$. Therefore by Mikhlin's Fourier multiplier theorem 
$(1+|\xi|)/s(\lambda_\ast,|\xi|)$ defines a bounded linear operator in $H^s_p(\R^n)$ and in $W^s_p(\R^n)$ for all $s\in\R$. 
$\hfill{\square} $

%%%%%%%%%%%%%%%%%%%%%%%%%%%%%%%%%%%%%%%%%%%%%%%%%%%%%%%%%
\section{The Spectrum of the Linearization}
%%%%%%%%%%%%%%%%%%%%%%%%%%%%%%%%%%%%%%%%%%%%%%%%%%%%%%%%%

As a base space for the functional analytic setting we use
$$X_0=L_{p,\sigma}(\R^{n+1};\R^{n+1})\times W^{2-1/p}_p(\R^n),$$
where the subscript $\sigma$ means solenoidal,
and we set
$$\bar{X}_1=H^2_{p}(\dot{\R}^{n+1};\R^{n+1})
\times W^{3-1/p}_p(\R^n)\big.$$
As before we use the decomposition $u=(v,w)$.
Define a closed linear operator in $X_0$ by means of
\begin{equation}
\label{A}
A(u,h)=
( -\frac{\mu}{\rho} \Delta u +\frac{1}{\rho}\nabla \pi,-w),
\end{equation}
%
%\begin{equation*}
%A\left(\begin{array}{l}\ab u\ab \\ \ab h\ab \end{array} \right)
%=\left(\begin{array}{l}\ab -\frac{\mu}{\rho} \Delta u +\frac{1}{\rho}\nabla \pi\ab 
%\\ \ab -w\ab \end{array} \right)
%\end{equation*}
with domain $X_1:=D(A)\subset \bar{X}_1$
\begin{eqnarray*}
D(A) \hspace{-0.3cm}&&=\{(u,h)\in \bar{X}_1\cap X_0:\;
[\![u]\!]=0,\;
[\![\mu\partial_y v + \mu\nabla_x w]\!]=0 \mbox{ on } \R^n\}.
\end{eqnarray*}
The pressure field $\pi$
%\in \dot{H}^1_p(\dot{\R}^{n+1})$ 
in \eqref{A}
is determined as the solution of the transmission problem
\begin{equation}
\label{transmission}
\begin{split}
&(\frac{1}{\rho}\nabla \pi|\nabla\phi)_{L_2}= (\frac{\mu}{\rho}\Delta u|\nabla \phi)_{L_2},
\quad \phi\in W^1_{p^\prime}(\R^{n+1}),\\
&\mbox{}[\![ \pi]\!]=2[\![\mu \partial_y w]\!]+\sigma \Delta h+[\![\rho]\!]\gamma_a h
\quad \mbox{ on } \R^n.
\end{split}
\end{equation}
One should compare this operator $A$ with \cite{SS09}, where the corresponding operator in the one-phase case has been introduced and analyzed.

\noindent
Concerning the transmission problem, we set
\begin{equation*}
\tilde{\pi}:=\pi/\rho, \quad f:= \mu \Delta u/\rho, 
\quad g:=2[\![\mu \partial_y w]\!]+\sigma \Delta h+[\![\rho]\!]\gamma_a h.
\end{equation*}
 Then the solution of the transmission problem consists of two parts 
 $\tilde{\pi}=\pi_1+\pi_2$. $\pi_1$ is given by
$\pi_1:= -{\rm div}\, D_n^{-1} f$, i.e.\ $\nabla\pi_1=Rf$, where $R$ denotes the Riesz-transform with symbol $\xi\otimes\xi /|\xi|^2$. 
On the other hand, $\pi_2$ solves 
\begin{equation*}
\label{pi-2}
\Delta\pi_2=0\;\: \text{in}\;\: \dot\R^{n+1},
\quad [\![\rho\pi_2]\!]=g-[\![\rho\pi_1]\!]=:g_0
\;\;\text{and}\;\; [\![\partial_y\pi_2]\!]=0\;\; \text{on}\;\:\R^n.
\end{equation*}
The solution of the latter problem is given by 
$$
\pi_2(\cdot,y)= \frac{1}{(\rho_1+\rho_2)}\;\text{sign}\,(y) P(|y|)g_0,
$$
where 
$\{P(s):s\ge 0\}$ denotes the Poisson semigroup on $\R^n$. 
Thus $\nabla\pi\in L_p(\R^{n+1})$ 
since $g$ and $g_0$ belong to $\dot{W}^{1-1/p}_p(\R^n)$ and $f\in L_p(\R^{n+1})$,
see for instance formula (2.14) in \cite{PrSi09} for the assertion for $\pi_2$.
\medskip\\
System (\ref{linFB}) can be
rewritten as the abstract evolution equation
\begin{equation} 
\label{ACP}
\frac{d}{dt}(u,h)+A(u,h)=(f_d,g_h),\quad t>0,\quad (u(0),h(0))=(u_0,h_0),
\end{equation}
provided $(f_d,g)=(0,0).$
Since (\ref{linFB}) has maximal $L_p$-regularity,
the abstract problem (\ref{ACP}) has maximal $L_p$-regularity as well.
In particular, $-A$ generates an analytic $C_0$-semigroup in $X_0$.
Concerning the spectrum $\sigma(A)$ of $A$ we have the following result.
%%%%%%%%%%%%%%%%%%%%%
\begin{proposition} 
Suppose $\rho_2>\rho_1>0$ and $\mu_1,\mu_2,\sigma,\gamma_a>0$ are constants, and let $A$ with domain $D(A)=X_1$ be defined in $X_0$ as above. \\
Then $\lambda_0\in\sigma(-A)$ for each zero 
$(\lambda_0,|\xi_0|)\in(0,\infty)\times (0,\infty)$ of $s(\lambda,|\xi|)=0.$
In particular, $\sigma(-A)\cap\C_+=[0,\lambda_\infty]$ with  $\lambda_\infty>0$ from the previous section.
\end{proposition}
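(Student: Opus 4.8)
The plan is to identify the resolvent equation $(\lambda_0+A)(u,h)=(f,g)$ for $-A$ with the stationary Stokes problem with free boundary \eqref{linstatFB} at $\lambda_\ast=\lambda_0$: by the definitions \eqref{A} and \eqref{transmission}, this equation is precisely \eqref{linstatFB} with $f_d=0$, $g_v=0$, $g_w=0$ and $g_h=g$, the vanishing of $g_v$ and $g_w$ being built into $D(A)$ and into the jump relation of \eqref{transmission}. Since the spectrum of the closed operator $A$ is closed, it suffices for the first assertion to show that, for a zero $(\lambda_0,|\xi_0|)\in(0,\infty)\times(0,\infty)$ of $s$, the operator $\lambda_0+A\colon X_1\to X_0$ is not bounded below; this I would do by exhibiting an approximate eigenfunction.

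The decisive input, to be extracted from the reduction carried out in \cite{PrSi09}, Sections~3--5 (and reused in the proof of \thmref{thm2.2}), is the following. After the partial Fourier transform in $x\in\R^n$, problem \eqref{linstatFB} at $\lambda_\ast=\lambda_0$ becomes, for each frequency $\xi$, an ODE transmission problem on $\dot{\R}$, and the successive eliminations there reduce the \emph{homogeneous} problem to the scalar equation $s(\lambda_0,|\xi|)\,\hat h(\xi)=0$. Hence for the frequency $\xi_0$ there is a nontrivial solution $(\bar u_{\xi_0}(\cdot),\bar\pi_{\xi_0}(\cdot),\bar h_{\xi_0})$ of the homogeneous frequency-$\xi_0$ problem which decays (exponentially, as $\lambda_0,\mu_i>0$) as $y\to\pm\infty$; moreover $\bar u_{\xi_0}$ is divergence free in the sense $i\xi_0\cdot\bar v_{\xi_0}+\bar w_{\xi_0}'=0$ and satisfies $[\![\bar u_{\xi_0}]\!]=0$ and $[\![\mu\bar v_{\xi_0}'+\mu\,i\xi_0\,\bar w_{\xi_0}]\!]=0$, since these conditions are imposed in the reduction. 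Thus $e^{i\xi_0\cdot x}\big(\bar u_{\xi_0}(y),\bar h_{\xi_0}\big)$ is a formal generalised eigenfunction of $\lambda_0+A$, failing to lie in $X_0$ only because $e^{i\xi_0\cdot x}\notin L_p(\R^n)$.

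Granting this, I would construct a singular sequence by spatial localisation. Fix $\chi\in C_c^\infty(\R^n)$ with $\chi\not\equiv0$ and set, for $R\ge1$,
\[
u_R(x,y)=R^{-n/p}\chi(x/R)e^{i\xi_0\cdot x}\,\bar u_{\xi_0}(y)+u_R^{\mathrm{corr}},\qquad
h_R(x)=R^{-n/p}\chi(x/R)e^{i\xi_0\cdot x}\,\bar h_{\xi_0},
\]
where $u_R^{\mathrm{corr}}$ is a correction of order $O(R^{-1})$, built from the divergence and the stress jump produced by the first term, chosen so that $u_R$ is solenoidal and $(u_R,h_R)\in D(A)$. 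Every $x$-derivative falling on $\chi(x/R)$ contributes a factor $R^{-1}$, and the pressure determined by \eqref{A} differs from $R^{-n/p}\chi(x/R)e^{i\xi_0\cdot x}\bar\pi_{\xi_0}(y)$ by the solution of a transmission problem with data of order $O(R^{-1})$; hence, using that $(\bar u_{\xi_0},\bar\pi_{\xi_0},\bar h_{\xi_0})$ solves the homogeneous frequency-$\xi_0$ problem,
\[
\Ver(u_R,h_R)\Ver_{X_0}\ge c>0 \qquad\text{and}\qquad \Ver(\lambda_0+A)(u_R,h_R)\Ver_{X_0}\le C\,R^{-1}
\]
for all $R\ge1$. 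Letting $R\to\infty$ shows that $\lambda_0$ lies in the approximate point spectrum of $-A$, so $\lambda_0\in\sigma(-A)$. (Alternatively one may argue by non-surjectivity: tracking the reduction, $\hat h(\xi)$ is recovered from the data via $s(\lambda_0,|\xi|)\hat h(\xi)=\cN(\xi)$ with $\cN$ a classical Fourier multiplier of the data which does not vanish identically on the sphere $\{|\xi|=|\xi_0|\}$; since the zero of $\tau\mapsto s(\lambda_0,\tau)$ at $\tau=|\xi_0|$ is simple — by the strict monotonicity of $\Phi(\zeta)=\zeta/k(\zeta)$ established in Proposition~3.1 — the multiplier $\cN(\xi)(1+|\xi|^2)^{(1-2/p)/2}/s(\lambda_0,|\xi|)$ is not in $L_\infty$, hence not an $L_p$-Fourier multiplier, and so \eqref{linstatFB} at $\lambda_\ast=\lambda_0$ fails to be solvable for suitable $(f,g)\in X_0$.)

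For the final assertion, recall from Section~3 that $\tau\mapsto\lambda(\tau)$ is continuous on $(0,\tau_*)$, tends to $0$ as $\tau\to0$ and as $\tau\to\tau_*$, and attains the maximum $\lambda_\infty>0$; by the intermediate value theorem its range is exactly $(0,\lambda_\infty]$, so by the first part $(0,\lambda_\infty]\subset\sigma(-A)\cap\C_+$, whence $[0,\lambda_\infty]\subset\sigma(-A)\cap\C_+$ by closedness of the spectrum. Conversely, if ${\rm Re}\,\lambda_0\ge0$ and $\lambda_0\notin[0,\lambda_\infty]$, then $s(\lambda_0,\tau)\neq0$ for all $\tau\ge0$ (by Proposition~3.1 and the discussion following it), and, exactly as in the proof of \thmref{thm2.2}, $c_0(1+|\tau|)\le|s(\lambda_0,\tau)|\le c_1(1+|\tau|)$ on a small sector $\Sigma_\eta$; the Mikhlin multiplier argument of that proof then shows $\lambda_0+A\colon X_1\to X_0$ is boundedly invertible, i.e.\ $\lambda_0\in\rho(-A)$. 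The main obstacle in this scheme is the equivalence exploited in the second paragraph — that a zero of the boundary symbol genuinely \emph{induces} a (generalised) eigenfunction rather than being spuriously cancelled inside the reduction of \cite{PrSi09} — together with the routine but fiddly construction of the constraint-restoring correction $u_R^{\mathrm{corr}}$; every remaining step rests only on estimates already available in \cite{PrSi08,PrSi09,PrSi09a} and in the proof of \thmref{thm2.2}.
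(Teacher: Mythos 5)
Your overall scheme coincides with the paper's: produce a singular sequence concentrated at the frequency $\xi_0$ of the symbol zero to show $\lambda_0$ lies in the approximate point spectrum, and treat $\lambda\in\C_+\setminus[0,\lambda_\infty]$ by Mikhlin inversion of the non-vanishing symbol; your handling of the second assertion (intermediate value theorem for the range of $\lambda(\tau)$, closedness of the spectrum, Mikhlin for the resolvent set) is exactly what the paper does in step (iv). The core of the first assertion, however, is not actually established in your write-up: the two steps you yourself flag carry essentially all the weight and neither is carried out. You assume (a) that a zero of $s(\lambda_0,\cdot)$ at $|\xi_0|$ genuinely produces a nontrivial, exponentially decaying solution of the homogeneous frequency-$\xi_0$ transmission problem (i.e.\ that the zero is that of the full Lopatinskii determinant of the reduction in \cite{PrSi09} and is not cancelled there), and (b) that a constraint-restoring correction $u_R^{\mathrm{corr}}$ exists with an $O(R^{-1})$ bound strong enough to keep $(u_R,h_R)\in D(A)\subset H^2_p$; note also that \thmref{thm2.2}, which you would presumably invoke for such a correction, only delivers $W^{2-2/p}_p$-regularity, not the $H^2_p$-regularity required by $D(A)$, and that the resolvent equation for $-A$ carries a momentum forcing $f$ which is absent from \eqref{linstatFB}, so the identification in your first paragraph is not literal. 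As it stands, the argument is a plausible programme rather than a proof.

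The paper's proof uses the same localization (its $h_\ve(x)=e^{i\xi_0\cdot x}\chi(\ve x)$ is your $\chi(x/R)e^{i\xi_0\cdot x}$ with $\ve=1/R$, but with $\hat\chi$ compactly supported, so $\hat h_\ve$ lives in $\bar B(\xi_0,\ve)$) and avoids both of your obstacles by a different construction of the $u$-component: instead of truncating an explicit generalized eigenprofile and correcting, it solves the inhomogeneous Stokes problem \eqref{Stokes} with boundary datum $(\sigma\Delta+[\![\rho]\!]\gamma_a)h_\ve$. Then $(u_\ve,h_\ve)\in D(A)$ holds exactly (divergence and jump conditions are built in, no correction needed), and the entire residual is the scalar function $g_\ve$ in the kinematic equation, $(\lambda_0+A)(u_\ve,h_\ve)=(0,g_\ve)$ with $g_\ve=\bigl(\lambda_0+(\sigma D_n^{1/2}-[\![\rho]\!]\gamma_a D_n^{-1/2})(\rho_1+\rho_2)^{-1}k(\lambda_0 D_n^{-1})\bigr)h_\ve$. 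Since $\hat h_\ve$ is supported near $\xi_0$, these operators act boundedly on the frequency-localized spaces, and a Dunford functional-calculus computation gives $g_\ve=s(\lambda_0,|\xi_0|)h_\ve+O(\ve)\,\ve^{-n/p}=O(\ve^{1-n/p})$, which after normalization yields the singular sequence; in particular the symbol enters only through this identity, so one never needs your step (a). If you want to keep your truncation-plus-correction route, you must actually derive the decaying profile $(\bar u_{\xi_0},\bar\pi_{\xi_0},\bar h_{\xi_0})$ from the explicit solution formulas behind \eqref{bdrysymb}, and construct $u_R^{\mathrm{corr}}$ repairing the $O(R^{-1})$ divergence and tangential-stress-jump errors without destroying $[\![u]\!]=0$ and with an $H^2_p$ bound of order $R^{-1}$; both are doable but constitute precisely the substance currently missing from your argument.
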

%%%%%%%%%%%%%%%%%%%%%%%
\begin{proof}
(i) The idea is to show that whenever we have a zero $(\lambda_0,|\xi_0|)$ of the boundary symbol $s$ then $\lambda_0$ is an approximate eigenvalue of $-A$, hence in $\sigma(-A)$.  So suppose  $s(\lambda_0,|\xi_0|)=0$. We define functions $h_\ve$ by means of
\begin{equation*}
h_\ve(x)=e^{i\xi_0\cdot x}\chi_\ve(x),\quad x\in\R^n,\; \ve>0,
\end{equation*}
where $\chi$ is a Schwartz-function such that its Fourier-transform $\hat\chi$ is a test function with ${\rm supp}\,(\hat\chi)\subset\bar{B}_{\R^n}(0,1)$, $\chi(0)=1$ 
and $\chi_\ve(x):=\chi(\ve x)$. This implies by means of the formula 
$\hat h_\ve(\xi)= \ve^{-n}\hat\chi((\xi-\xi_0)/\ve)$ that the support of $\hat h_\ve$ is contained in the ball $\bar{B}_{\R^n}(\xi_0,\ve)$.
In particular, for all $r\in\R$, $1<p<\infty$ and $K\in\{H,W\}$,  the operators 
$D_n:=-\Delta$, $D_n^{1/2}$ and $D_n^{-1/2}$ as well as $B_0:=\lambda_0 D_n^{-1}$ act boundedly  in 
$$
K^r_{p,c}(\R^n):=\{g\in K^r_p(\R^n):
\,{\rm supp}\,(\hat g)\subset \bar{B}_{\R^n}(\xi_0,\ve)\},
$$ 
as long as, say, $\ve\leq|\xi_0|/2$. Note that the spectra of $D_n$, $D_n^{1/2}$ and $B_0$ in $K^r_{p,c}$ are contained in $[\alpha,\beta]$ for some $0<\alpha<\beta<\infty$.
\medskip\\
%%%%%%%%%%%%%%%%%%%%%%%%%%%%%%%
(ii)
 The $L_p$-norm of $h_\ve$ is $\Ver h_\ve\Ver_p= \ve^{-n/p}\Ver\chi\Ver_p$, and moreover we have
\begin{equation*}
\Ver h_\ve\Ver_{H^k_p} \leq c\ve^{-n/p}\Ver\chi\Ver_{H^k_p},
\quad k\leq 4,\quad 0<\ve\leq|\xi_0|/2.
\end{equation*}
Hence by the interpolation inequality one sees that
\begin{equation*}
c_0\ve^{-n/p}\leq \Ver h_\ve\Ver_ {K^{r}_p}\leq  c_1\ve^{-n/p},
\quad  0<\ve\leq|\xi_0|/2,\quad r\in[0,4],
\end{equation*}
where $K\in\{H,W\}$ and $c_0,c_1$ are positive constants.

Next we solve the Stokes problem
\begin{equation}
\label{Stokes}
\left\{
\begin{aligned}
\rho\lambda_0 u -\mu\Delta u+\nabla \pi&=0
    &\ \hbox{in}\quad &\dot\R^{n+1}\\
{\rm div}\,u&= 0&\ \hbox{in}\quad &\dot\R^{n+1}\\
-[\![\mu\partial_y v]\!] -[\![\mu\nabla_{x}w]\!]&=0
	&\ \hbox{on}\quad &\R^n\\
-2[\![\mu\partial_y w]\!] +[\![\pi]\!] &=(\sigma\Delta +[\![\rho]\!] \gamma_a) h_\ve
	&\ \hbox{on}\quad &\R^n\\
[\![u]\!] &=0 &\ \hbox{on}\quad &\R^n.
\end{aligned}
\right.
\end{equation}
This is possible since $\lambda_0>0$, and we obtain for the solution $(u_\ve,\pi_\ve)$ the estimate
\begin{equation*}
\Ver u_\ve\Ver_{H^2_p(\dot{\R}^{n+1})}+ \Ver\nabla \pi_\ve\Ver_{L_p(\R^{n+1})} 
\leq C\Ver h_\ve\Ver_{W^{3-1/p}_p(\R^n)}\leq Cc_1\ve^{-n/p}.
\end{equation*}
We then have
$$\lambda_0 (u_\ve, h_\ve) +A (u_\ve,h_\ve) = (0, g_\ve),$$
where $g_\ve = \lambda_0 h_\ve
+ (\sigma D_n^{1/2}-[\![\rho]\!]\gamma_a D_n^{-1/2})
(\rho_1+\rho_2)^{-1}k(\lambda_0 D_n^{-1})h_\ve$.
\medskip\\
%%%%%%%%%%%%%%%%%%%%%%%%%%%%%%
(iii) It remains to estimate $g_\ve$.
%For this purpose we consider the Fourier transform of $g_\ve$.
%Since $s(\lambda_0,\xi_0)=0$ we have
%\begin{align*}
%\hat g_\ve(\xi) &= \lambda_0 \hat h_\ve(\xi) + (\sigma|\xi| -[\![\rho]\!]\gamma_a /|\xi|) k(\lambda_0/|\xi|^2)\hat h_\ve(\xi)\\
%&= (\phi(\xi)-\phi(\xi_0))\hat h_\ve(\xi)\\
%&=(\phi(\xi)-\phi(\xi_0))\ve^{-n}\hat \chi((\xi-\xi_0)/\ve),
%\end{align*}
%where we employed the abbreviation
%$$ \phi(\xi) =(\sigma|\xi| -[\![\rho]\!]\gamma_a /|\xi|) k(\lambda_0/|\xi|^2).$$
First we observe that
\begin{equation*}
 D_n h_\ve(x) = |\xi_0|^2 h_\ve(x) -\ve r_\ve(x),\quad r_\ve(x):=e^{i\xi_0\cdot x}[ 2 i ((\xi_0|\nabla)\chi)(\ve x) + \ve(\Delta \chi)(\ve x),
\end{equation*}
hence
$$
 D_n^{-1} h_\ve(x)= |\xi_0|^{-2} h_\ve(x) + \ve|\xi_0|^{-2} D_n^{-1}r_\ve(x),
$$
and therefore
\begin{equation*}
(\zeta-B_0)h_\ve = (\zeta-\lambda_0|\xi_0|^{-2})h_\ve -\ve|\xi_0|^{-2}B_0r_\ve,
\end{equation*}
which yields
\begin{equation*}
(\zeta-B_0)^{-1}h_\ve = (\zeta-\lambda_0|\xi_0|^{-2})^{-1}h_\ve +\ve|\xi_0|^{-2}(\zeta-\lambda_0|\xi_0|^{-2})^{-1}(\zeta-B_0)^{-1}B_0 r_\ve.
\end{equation*}
With $k_1(\zeta)=(\sigma - \zeta[\![\rho]\!]\gamma_a/\lambda_0)(\rho_1+\rho_2)^{-1}k(\zeta)$, by means of Dunford's functional calculus
in $K^r_{p,c}(\R^n)$
 this yields, with a closed contour $\Gamma$ in the open right half-plane surrounding 
 $[\alpha,\beta]$, 
\begin{align*}
k_1(B_0) h_\ve &= \frac{1}{2\pi i} \int_\Gamma k_1(\zeta) (\zeta-B_0)^{-1} h_\ve\, d\zeta \\
&= k_1(\lambda_0|\xi_0|^{-2})h_\ve  
+ \frac{\ve}{2\pi i|\xi_0|^2}\int_\Gamma k_1(\zeta) (\zeta-\lambda_0|\xi_0|^{-2})^{-1} 
(\zeta-B_0)^{-1} B_0r_\ve\, d\zeta\\
&=k_1(\lambda_0|\xi_0|^{-2})h_\ve + \ve K_1r_\ve,
\end{align*}
where the operator $K_1$ is bounded and does not depend on $\ve$. Here we used the fact that $k_1(\zeta)$ is bounded. Finally, we have in a similar way
\begin{align*}
D_n^{1/2} h_\ve &=\frac{1}{2\pi i}\int_\Gamma\sqrt{\zeta}(\zeta-D_n)^{-1}h_\ve\,d\zeta \\
&= |\xi_0| h_\ve - \frac{\ve}{2\pi i}\int_\Gamma \sqrt{\zeta}(\zeta-|\xi_0|^2)^{-1}(\zeta-D_n)^{-1}\, d\zeta\, r_\ve\\
& =|\xi_0| h_\ve + \ve K_2r_\ve,
\end{align*}
with a bounded operator $K_2$ that is independent of $\ve$.
Thus in summary we have
\begin{align*}
g_\ve & =\big(\lambda_0 + (\sigma D_n^{1/2}-[\![\rho]\!]\gamma_a D_n^{-1/2})
(\rho_1+\rho_2)^{-1}k(B_0)\big)h_\ve\\
 &= s(\lambda_0,|\xi_0|)h_\ve + \ve \big( k_1(\lambda_0/|\xi_0|^2)K_2+K_1 D_n^{1/2}\big)r_\ve\\
 & =\ve \big(k_1(\lambda_0/|\xi_0|^2) K_2+K_1 D_n^{1/2}\big)r_\ve,
\end{align*}
since by assumption $s(\lambda_0,|\xi_0|)=0.$ The operators  $K_j$ are bounded and $D_n^{1/2}$ is so on functions whose Fourier transform has compact support bounded away from 0, hence we obtain
$$\Ver g_\ve\Ver_{W^{2-1/p}_p(\R^n)}\leq  C\ve \Ver r_\ve\Ver_{W^{2-1/p}_p(\R^n)} 
\le C\ve \Ver h_\ve\Ver_{W^{4-1/p}_p(\R^n)} \leq C \ve \ve^{-n/p}.$$
Scaling the functions $h_\ve$ by the factor $\ve^{n/p}$ we have   
$\Ver (u_\ve,h_\ve)\Ver_{X_0}\geq c>0$ and
$(\lambda_0+A)(u_\ve,h_\ve) \to 0$ as $\ve\to 0$. Thus $\lambda_0$ must belong to the spectrum of $(-A)$.
\medskip\\
%%%%%%%%%%%%%%%%%%%%%%%%%%%%%%%%%%%%%
(iv) Finally, if $\lambda\in \C_+$ is not in $[0,\lambda_\infty]$, then $s(\lambda,\cdot)$ does not vanish. Therefore the boundary symbol can be inverted by means of Mikhlin's multiplier theorem and then solving the remaining Stokes problem we see that $\lambda+A$ is boundedly invertible. Thus such $\lambda$ belong to the resolvent set of $A$. The proof is complete.
\end{proof}

%%%%%%%%%%%%%%%%%%%%%%%%%%%%%%%%%%%%%%%%%%%%%%%%%%%%%%%%%
\section{Henry's Instability Theorem}
%%%%%%%%%%%%%%%%%%%%%%%%%%%%%%%%%%%%%%%%%%%%%%%%%%%%%%%%%

For the reader's convenience we provide the statement of Henry's instability theorem, 
see \cite[Theorem 5.1.5]{henry}, for a more specialized situation which is appropriate for the problem under consideration in this paper.
%%%%%%%%%%%%%%%%%%%%%%%%%
\begin{theorem} 
Let $X$ be a Banach space, $T\in C^2(B_X(0,\varrho);X)$ such that $T(0)=0$ and the spectral radius
${\rm spr}\; T^\prime(0)$ of $T^\prime(0)$ is greater than one. Then the origin is unstable in the sense that there is a constant $\ve_0>0$ such that for each $\delta>0$ there is $x_\delta\in B_X(0,\delta)$ and $N\in\N$ such that the sequence 
$x_k:=T^kx_\delta\in B_X(0,\varrho)$ is well-defined for all $k\in\{1,\ldots,N\}$ and $\Ver T^Nx_\delta\Ver\geq \ve_0$.
\end{theorem}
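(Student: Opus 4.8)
The plan is to reduce the instability of the fixed point $0$ to a linear growth estimate along one carefully chosen orbit, exploiting that $T$ is a quadratically small perturbation of $L:=T'(0)$ near $0$. Write $N(x):=T(x)-Lx$; from $T\in C^2$ with $N(0)=0$, $N'(0)=0$ one obtains a radius $\varrho_0\in(0,\varrho)$ and a constant $C>0$ with $\|N(x)\|\le C\|x\|^2$ for $\|x\|\le\varrho_0$. Put $r:={\rm spr}\,L>1$, fix $\rho_2\in(r,r^2)$ — possible precisely because $r>1$ — and, by Gelfand's formula, a constant $C_1\ge1$ with $\|L^m\|\le C_1\rho_2^m$ for all $m\ge0$. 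Since $r=\max\{|z|:z\in\sigma(L)\}$ is attained, choose $\lambda_0\in\sigma(L)$ with $|\lambda_0|=r$; as $\lambda_0$ lies on the topological boundary of $\sigma(L)$, it is an approximate eigenvalue, so for every $\eta>0$ there is a unit vector $u$ with $\|(L-\lambda_0)u\|<\eta$. (If $X$ is real one complexifies; this does not affect the argument.)

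I would then fix, once and for all, a small constant $\varepsilon_0>0$ subject to finitely many explicit inequalities that emerge below — in particular $\tfrac92\,r\varepsilon_0\le\varrho_0$ and $\tfrac32\,C''r\varepsilon_0\le\tfrac1{10}$, where $C'':=9CC_1/(r^2-\rho_2)$. Given $\delta>0$ (it suffices to treat $\delta<3\varepsilon_0$), set $N_1:=\lceil\log(3\varepsilon_0/\delta)/\log r\rceil$, so that $3\varepsilon_0\le\delta r^{N_1}<3r\varepsilon_0$, and choose a unit vector $u$ with $\|(L-\lambda_0)u\|$ so small that the telescoping identity $L^nu-\lambda_0^nu=\sum_{j=0}^{n-1}\lambda_0^jL^{n-1-j}(L-\lambda_0)u$ together with $\|L^m\|\le C_1\rho_2^m$ yields $\|L^nu-\lambda_0^nu\|\le\tfrac1{10}r^n$ for all $0\le n\le N_1$. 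Put $x_\delta:=(\delta/2)u$ and define $x_{n+1}:=T(x_n)$.

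The heart of the argument is an induction on $n\le N_1$ establishing that $x_n$ is well-defined, $\|x_n\|\le 3(\delta/2)r^n$, and $\|x_n-(\delta/2)L^nu\|\le C''(\delta/2)^2r^{2n}$. Given the bounds for indices $<n$, the orbit has stayed in $B_X(0,\varrho_0)$, so $x_n=T(x_{n-1})$ is defined and $x_n=(\delta/2)L^nu+\sum_{j=0}^{n-1}L^{n-1-j}N(x_j)$; estimating the sum by $\|N(x_j)\|\le C\|x_j\|^2\le 9C(\delta/2)^2r^{2j}$, $\|L^{n-1-j}\|\le C_1\rho_2^{n-1-j}$, and the elementary bound $\sum_{j=0}^{n-1}\rho_2^{n-1-j}r^{2j}\le r^{2n}/(r^2-\rho_2)$ gives the claimed estimate on $\|x_n-(\delta/2)L^nu\|$, and then $\|x_n\|\le(\delta/2)\bigl(1+\tfrac1{10}\bigr)r^n+C''(\delta/2)^2r^{2n}\le 3(\delta/2)r^n$ since $(\delta/2)r^n<\tfrac32 r\varepsilon_0$ and $\varepsilon_0$ is small. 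In particular $x_1,\dots,x_{N_1}\in B_X(0,\varrho_0)\subset B_X(0,\varrho)$, and at the last step $\|x_{N_1}\|\ge(\delta/2)\|L^{N_1}u\|-C''(\delta/2)^2r^{2N_1}\ge(\delta/2)r^{N_1}\bigl(\tfrac9{10}-C''(\delta/2)r^{N_1}\bigr)$; since $(\delta/2)r^{N_1}\in[\tfrac32\varepsilon_0,\tfrac32 r\varepsilon_0)$ and $\tfrac32 C''r\varepsilon_0\le\tfrac1{10}$, this is $\ge\tfrac32\varepsilon_0\cdot\tfrac8{10}>\varepsilon_0$. Hence $N:=N_1$ and $x_\delta\in B_X(0,\delta)$ witness the asserted instability, and since $\varepsilon_0$ was chosen independently of $\delta$, the origin is unstable.

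The main obstacle is that $L=T'(0)$ need not be hyperbolic: $\sigma(L)$ may touch the unit circle (e.g. $\sigma(L)=[0,r]$), so in general there is no splitting of $X$ into complementary $L$-invariant subspaces on which $L$ is respectively expanding and non-expanding, and the classical projection/unstable-manifold argument is unavailable. The reason for running the estimate only over the window $n\le N_1\approx\log(1/\delta)/\log r$ is that there the linear term $(\delta/2)\lambda_0^nu$, of size comparable to $\delta r^n$ and hence bounded by a fixed constant throughout, dominates the accumulated nonlinear contribution $O((\delta r^n)^2)$; keeping this domination uniform forces the geometric-sum estimate to use $\rho_2<r^2$ (which is again where $r>1$ enters essentially) and forces $\varepsilon_0$ to be fixed small before $\delta$, with $N_1$ and the approximate eigenvector $u$ chosen afterward. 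Getting this ordering of quantifiers and the balance of the competing scales right is the one genuinely delicate point; the remainder is bookkeeping.
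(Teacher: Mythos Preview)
The paper does not prove this theorem; it is quoted from Henry's monograph (Theorem~5.1.5 there) for the reader's convenience and then applied to the time-one map. So there is no proof in the paper to compare against.

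Your argument is essentially correct and follows the standard route (which is also Henry's): linearize, pick an approximate eigenvector $u$ for a spectral value $\lambda_0$ with $|\lambda_0|=r>1$, and track the orbit over the finite window $n\le N_1\sim\log(1/\delta)/\log r$ on which the linear growth $\sim r^n$ dominates the accumulated quadratic remainder. The key device---choosing $\rho_2\in(r,r^2)$ so that the geometric sum $\sum_{j}\rho_2^{\,n-1-j}r^{2j}$ is controlled by $r^{2n}/(r^2-\rho_2)$---is exactly right, and your ordering of quantifiers ($\ve_0$ before $\delta$, then $N_1$, then $u$ depending on $N_1$) is the delicate point and you have it correct.

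One caveat: the parenthetical ``if $X$ is real one complexifies; this does not affect the argument'' is too quick. The nonlinear map $T$ does not extend to the complexification, so the iteration $x_{n+1}=T(x_n)$ must be run in $X$ itself; if $\lambda_0$ is non-real the approximate eigenvector $u=u_1+iu_2$ is complex and $x_\delta=(\delta/2)u$ is not admissible. The fix is routine: with any reasonable complexification norm one has $\|L^nu_j\|\le\|L^nu\|\le\tfrac{11}{10}r^n$ for $j=1,2$ and all $n\le N_1$, while $\|L^{N_1}u_1\|+\|L^{N_1}u_2\|\ge\|L^{N_1}u\|\ge\tfrac{9}{10}r^{N_1}$, so for at least one $j$ the real vector $u_j$ serves as initial datum and your induction closes with slightly adjusted constants. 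You should spell this out rather than dismiss it.
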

%%%%%%%%%%%%%%%%%%%%%%%%%%%
\noindent
As an illustration consider as in \cite{Pru03} and \cite{PrWi09} the quasi-linear evolution equation
\begin{equation}\label{qlevp}
\dot{u} +A(u)u=F(u),\;t>0,\quad u(0)=u_0,
\end{equation}
where $X_1\hookrightarrow X_0$ are densely embedded Banach spaces, 
$X_\gamma:=(X_0,X_1)_{1-1/p,p}$ is a real interpolation space of order 
$1-1/p$ and power $p\in(1,\infty)$ between $X_0$ and $X_1$.
The nonlinearities 
$(F,A):X_\gamma\to X_0\times\cL(X_1,X_0)$ are of class $C^2$. Assume that $u_*\in X_1$ is an equilibrium of \eqref{qlevp}, i.e.\ $ A(u_*)u_*=F(u_*)$. Assume further that the linearization of \eqref{qlevp} has maximal $L_p$-regularity, i.e.\ the operator $A_0$ defined by
$$A_0v:= A(u_*)v +[A^\prime(u_*)v]u_*-F^\prime(u_*)v,\quad v\in X_1,$$
is $\cR$-sectorial. Due to the results in \cite{Pru03} and \cite{PrWi09}, 
the Poincar\'e map of \eqref{qlevp},  
$[u_0\mapsto Tu_0-u_*:=u(a,u_0)-u_*],$
where $u(t,u_0)$ denotes the solution of \eqref{qlevp} with $u_0\in B_{X_\gamma}(u_*,\varrho)$, 
is well-defined and satisfies the assumptions of Henry's instability theorem in 
$X:=X_\gamma$, provided $\varrho>0$ is sufficiently small. 
The derivative $T^\prime(0)$ equals $e^{-A_0a}$, which by the spectral mapping theorem for generators of analytic $C_0$-semigroups has spectral radius greater than one 
if and only if the spectrum of the operator $-A_0$ contains points in the open right half-plane. In particular, in contrary to \cite{Pru03} no spectral gap is required. Therefore we may conclude that the equilibrium $u_*$ is unstable in the natural phase space $X_\gamma$ of the problem.
\medskip\\
In the situation of the two-phase Navier-Stokes equations with surface tension and subject to gravity, things are more involved due to the inherent nonlinearity of the compatibility conditions for the transformed problem 
\begin{equation}
\label{compatibility}
\begin{split}
{\rm div}\, u&=F_d(u,h), \\
 -[\![\mu\partial_yv]\!]-[\![\mu\nabla_x w]\!]&=G_v(u,h),\\
 -2[\![\mu\partial_yw]\!]+[\![\pi]\!]-(\sigma\Delta h + [\![\rho]\!]\gamma_a)h&=G_w(u,h),
\end{split}
\end{equation}
on the interface. This leads to a nonlinear phase manifold where the semi-flow lives on. The way out is to parameterize  this manifold. Nevertheless, the linearization of the time-one map will turn out to be the operator $e^{-A}$ with $A$ described in the previous section, which by Proposition 4.1 has spectral values in the open right half-plane. This way we will still be able to apply Henry's result, and as a consequence to obtain a rigorous proof 
for the Rayleigh-Taylor instability.

%%%%%%%%%%%%%%%%%%%%%%%%%%%%%%%%%%%%%%%%%%%%%%%%%%%%%%%%%
\section{Proof of the main result}
%%%%%%%%%%%%%%%%%%%%%%%%%%%%%%%%%%%%%%%%%%%%%%%%%%%%%%%%%
\noindent
(i) In a first step we parameterize the phase manifold $\cPM$ of 
system \eqref{tfbns2} which lies in 
\begin{equation*}
X_\gamma:= \{(u,h)\in W^{2-2/p}_p(\dot{\R}^{n+1};
\R^{n+1})\times W^{3-2/p}_p(\R^n):
[\![u]\!]=0\}
\end{equation*}
and is defined by
\begin{align*} 
\cPM :=\big\{ (u,h)\in X_\gamma:\; 
{\rm div}\, u = u\cdot\nabla h \mbox{ in }\dot{\R}^{n+1}, \;\;
-[\![\mu\partial_y v]\!]-[\![\mu\nabla_x w]\!]= G_v(u,h) \big\}.
\end{align*}
We want to parameterize $\cPM$ locally near $(0,0)$ over the closed linear subspace 
$X^0_\gamma\subset X_\gamma$, defined by
\begin{equation*}
X_\gamma^0:=
\big\{ (u,h)\in X_\gamma:\; {\rm div}\, u = 0 \mbox{ in } \dot{\R}^{n+1},\;
[\![\mu\partial_y v]\!]+[\![\mu\nabla_x w]\!]= 0 \mbox{ on } \R^n\big\},
\end{equation*}
by means of an analytic map 
$\Phi:  B_{X^0_\gamma}(0,r)\to \cPM$ which is bijective onto its range such that $\Phi^\prime(0)=I$.
For this purpose we consider the problem
\begin{equation*}
\label{nonlinstokes1}
\left\{
\begin{aligned}
\rho\lambda_\ast u -\mu\Delta u+\nabla \pi&=0
    &\ \hbox{in}\ &\dot\R^{n+1}\\
{\rm div}\,u&= F_d(u+\tilde{u},h+\tilde{h})&\ \hbox{in}\ &\dot\R^{n+1}\\
-[\![\mu\partial_y v]\!] -[\![\mu\nabla_{x}w]\!] &=G_v(u+\tilde{u},h+\tilde{h})
	&\ \hbox{on}\ &\R^n\\
-2[\![\mu\partial_y (w+\tilde{w})]\!] +[\![\pi]\!] &= 
(\sigma\Delta +[\![\rho]\!] \gamma_a)(h+\tilde {h})
%\\ &\quad 
+G_w(u+\tilde{u},h+\tilde{h})
	&\ \hbox{on}\ &\R^n\\
[\![u]\!] &=0 &\ \hbox{on}\ &\R^n\\
\lambda_\ast h-w|_{y=0}&=H(u+\tilde{u},h+\tilde{h}) &\ \hbox{on}\ &\R^n,
\end{aligned}
\right.
\end{equation*}
with some fixed, sufficiently large  $\lambda_\ast>0$ and given 
$(\tilde{u},\tilde{h})\in  B_{X^0_\gamma}(0,r)\subset X_\gamma^0$.
We write this equation in short hand notation as  $L_{\lambda_\ast} z=N(z+\tilde{z})$ in
%\begin{equation*}
%\bar{X}_\gamma= [W^{2-2/p}_p(\dot{\R}^{n+1};\R^{n+1})\cap H^1_p(\R^{n+1};\R^{n+1})]\times W^{3-2/p}_p(\R^n)
%\end{equation*}
\begin{equation*}
\mathbb Z_\gamma:=\big\{(u,\pi,[\![\pi]\!],h): 
(u,h)\in X_\gamma,\; \pi\in \dot W^{1-2/p}_p(\dot\R^{n+1}),\;
[\![\pi]\!]\in W^{3-2/p}_p(\R^n)\big\}
\end{equation*}
where $z=(u,\pi,[\![\pi]\!],h)$ and $\tilde z=(\tilde u,0,0,\tilde h)$.
Since the nonlinear terms are polynomial, it is not difficult to verify that $N$ 
is real analytic and  $N^\prime(0)=0$; see \cite[Proposition~6.2]{PrSi09} for a related result.
$L_{\lambda_\ast}$ is invertible by Theorem 2.2, 
and hence the implicit function theorem yields a unique solution 
$z=\frak{Z}(\tilde u,\tilde h)\in \ZZ_\gamma$ near $0$.
The mapping 
$$\frak{Z}: B_{X^0_\gamma}(0,r)\to \ZZ_\gamma,$$
with  $r$ chosen small enough, 
is real analytic and satisfies $\frak{Z}^\prime(0)=0$. 
Denoting by $P:\ZZ_\gamma \to X_\gamma$ the projection
given by $Pz:=(u,h)$ for $z=(u,\pi,[\![\pi]\!],h)$,  
we set 
\begin{equation}
\label{Phi}
 \Phi(\tilde{u},\tilde{h}):= (\tilde{u},\tilde{h}) +\phi(\tilde{u},\tilde{h})
 \quad\text{with}\quad
 \phi(\tilde u,\tilde h):=P\frak{Z}(\tilde u,\tilde h).
\end{equation}
Then $\Phi( B_{X^0_\gamma}(0,r))\subset \cPM$,
% $\text{Im}(\Phi)\subset \cPM$, 
$\Phi: B_{X^0_\gamma}(0,r)\to X_\gamma$ is real analytic, $\Phi^\prime(0)=I$, and $\Phi$ is injective.
 Hence it remains to show local surjectivity near $0$. So suppose that $(\bar{u},\bar{h})\in\cPM$ has sufficiently small norm. Solving the problem
\begin{equation*}
%\label{nonlinstokes2}
\left\{
\begin{aligned}
\rho\lambda_\ast u -\mu\Delta u+\nabla \pi&=0
    &\ \hbox{in}\quad &\dot\R^{n+1}\\
{\rm div}\,u&= F_d(\bar{u},\bar{h})&\ \hbox{in}\quad &\dot\R^{n+1}\\
-[\![\mu\partial_y v]\!] -[\![\mu\nabla_{x}w]\!] &=G_v(\bar{u},\bar{h})
	&\ \hbox{on}\quad &\R^n\\
-2[\![\mu\partial_y w]\!] +[\![\pi]\!] -(\sigma\Delta +[\![\rho]\!] \gamma_a) h&= G_w(\bar{u},\bar{h})
	&\ \hbox{on}\quad &\R^n\\
[\![u]\!] &=0 &\ \hbox{on}\quad &\R^n\\
\lambda_\ast h-w|_{y=0}&=H(\bar{u},\bar{h}) &\ \hbox{on}\quad &\R^n,
\end{aligned}
\right.
\end{equation*}
by means of  Theorem 2.2
yields a unique solution $z=(u,\pi,[\![\pi]\!],h)\in\ZZ_\gamma$.
One readily verifies that
$(\tilde{u},\tilde{h}):= (\bar{u}-u,\bar{h}-h)$ belongs to $X_\gamma^0$ and $\phi(\tilde{u},\tilde{h})=(u,h)$, showing surjectivity of $\Phi$ near $0$. In particular, 
$\cPM\subset X_\gamma$ is a real analytic manifold near $0\in X_\gamma$.
We observe that due to
\begin{equation*}
-2[\![\mu\partial_y (w+\tilde{w})]\!] +[\![\pi]\!]=  
(\sigma\Delta +[\![\rho]\!] \gamma_a)(h+\tilde {h})
+G_w(u+\tilde{u},h+\tilde{h})\ \hbox{on}\ \R^n
\end{equation*}
the last condition in \eqref{compatibility} 
is satisfied as well.
\smallskip\\
%%%%%%%%%%%%%%%%%%%%%%%%%%%%%%
(ii) Now we proceed as in the proof of  \cite[Theorem 6.3]{PrSi09}, 
employing the notation of that proof. For a given $(u_0,h_0)\in \cPM$ we construct the extension $z_*(u_0,h_0)\in \EE_1(a)$. 
The map
\begin{equation*}
[(\tilde u_0,\tilde h_0))\mapsto z_\ast(\Phi(\tilde u_0,\tilde h_0))]
: B_{X^0_\gamma}(0,r)\to\EE(a)
\end{equation*}
is real analytic.
Therefore, fixing  $a>0$ the mapping
\begin{equation}
\label{K}
\begin{split}
&K: {_0\EE}(a)\times  B_{X_\gamma^0}(0,r)\to {_0\FF(a)}, \\ 
&K(z,(\tilde u_0,\tilde h_0)):= 
L(z+z_*(\Phi(\tilde u_0,\tilde h_0)))- N(z+z_*(\Phi(\tilde u_0,\tilde h_0)))
\end{split}
\end{equation} 
is real analytic. 
%from ${_0\EE}(a)\times  B_{X_\gamma^0}(0,r)$ to ${_0\EE}(a)$. 
We have 
$K(0,0)=0$ as well as $D_zK(0,0)=L$.
Therefore the implicit function theorem yields a real analytic 
map 
$$
\Psi: B_{X_\gamma^0}(0,r)\to {_0\EE}(a)
$$
such that 
$K(\Psi(\tilde u_0,\tilde h_0),(\tilde u_0,\tilde h_0))=0$
for all $(\tilde u_0,\tilde h_0)\in B_{X_\gamma^0}(0,r)$,
with $r$ chosen sufficiently small.
Thus 
\begin{equation}
\label{solution}
z:=\Psi(\tilde u_0,\tilde h_0)+ z_*(\tilde u_0,\tilde h_0)\in\EE(a)
\end{equation}
is the unique solution of problem (2.1) and, moreover, the mapping
\begin{equation}
\label{z-analytic}
[(\tilde u_0,\tilde h_0)\mapsto z(\tilde u_0,\tilde h_0)]:
B_{X_\gamma^0}(0,r)\to {\EE}(a))
\end{equation}
is real real-analytic.
\smallskip\\
%%%%%%%%%%%%%%%%%%%%%%%%%%%%
(iii) Having obtained a unique solution $z=z(\tilde u_0,\tilde h_0)\in\EE(a)$ 
of \eqref{tfbns2}, we can employ the same arguments as in steps (vi)--(vii) in the proof of
\cite[Theorem 6.3]{PrSi09} to establish analyticity of the solution
as stated in Theorem~\ref{th:1.1}.
\goodbreak
\smallskip\noindent
%%%%%%%%%%%%%%%%%%%%%%%%%%%%%%%
(iv) Differentiating the mapping $[(\tilde u_0,\tilde h_0)\mapsto z(\tilde u_0,\tilde h_0)]$ 
w.r.t.\ the initial value $(\tilde u_0,\tilde h_0)$ 
one sees that the linearization at $(0,0)$ of \eqref{tfbns2} is given by the Cauchy problem 
\begin{equation*}
\frac{d}{dt}(u,h)+A(u,h)=0,\quad (u(0),h(0))=(u_0,h_0).
\end{equation*}
This implies that the linearization at $(0,0)$ of the time-one-map 
$
[(\tilde u_0,\tilde h_0)]\mapsto z(1)]$ of the nonlinear problem \eqref{tfbns2} is given by $e^{-A}$. Since $(-A)$ generates an analytic $C_0$-semigroup in $X_0$ 
the spectral mapping theorem 
%for analytic semigroups 
yields $\text{spr}\, (e^{-A}) = e^{\lambda_\infty }>1$ by Proposition 4.1, hence we may apply Theorem 5.1 to obtain instability of the trivial solution. The proof of our main result is therefore complete.
$\hfill\square$
\bigskip
%%%%%%%%%%%%%%%%%%%%%%%%%%%%%%%%

\end{document}